\def\eps{\varepsilon}
\def\e{{\rm e}} 
\def\dd{{\rm d}}
\def\ddt{{\frac{\dd}{\dd t}}}
\def\R {\mathbb{R}}
\def\Z {\mathbb{Z}}
\def\N {\mathbb{N}}
\def\J {\mathcal{J}}
\def \l {\langle}
\def \r {\rangle}
\def\T {{\mathbb T}}
\def\UU {{\mathfrak U}}
\def\de{{\partial}}
\def\aJ {{\alpha}}
\def\bJ {{\beta}}
\def\cJ {{\gamma}}
\def\ff{{\widehat f}}
\newtheorem{proposition}{Proposition}[section]
\newtheorem{theorem}[proposition]{Theorem}
\newtheorem{lemma}[proposition]{Lemma}
\theoremstyle{definition}
\newtheorem{remark}[proposition]{Remark}
\numberwithin{equation}{section}
\title[Stable mixing estimates in the infinite P\'eclet number limit]{Stable mixing estimates in the infinite P\'eclet number limit}
\author[M. Coti Zelati]{Michele Coti Zelati$^\sharp$}
\thanks{$^\sharp$\textit{m.coti-zelati@imperial.ac.uk}, Department of Mathematics, Imperial College London, London, SW7 2AZ, UK}
\subjclass[2000]{35K15, 35Q35, 76F25, 76R50}
\keywords{Mixing, enhanced diffusion, hypocoercivity, vector field, shear flows, drift-diffusion equation}
\begin{document}

\begin{abstract}
We consider a passive scalar $f$ advected by a strictly monotone shear flow and with a diffusivity parameter $\nu\ll 1$.
We prove an estimate on the homogeneous $\dot{H}^{-1}$ norm of $f$ that combines both the $L^2$ enhanced diffusion 
effect at a sharp rate proportional to $\nu^{1/3}$, and the sharp mixing decay proportional to $t^{-1}$ of the $\dot{H}^{-1}$ norm 
of $f$ when $\nu=0$.
In particular, the estimate is stable in the infinite P\'eclet number limit, as $\nu\to 0$. To the best of our knowledge, this is the first result
of this kind since the work of Kelvin in 1887 on the Couette flow.

The two key ingredients in the proof are an adaptation of the hypocoercivity method and the use of a vector field $J$ that commutes with 
the transport part of the equation. The $L^2$ norm of $Jf$ together with the $L^2$ norm of $f$ produces a suitable upper bound for the  
$\dot{H}^{-1}$ norm of the solution that gives the extra decay factor of $t^{-1}$.
\end{abstract}


\maketitle


\section{Introduction}
Questions related to the understanding of the interaction between mixing and diffusion in fluid mechanics date back
to the fundamental works of Kelvin \cite{Kelvin87} and Reynolds \cite{Reynolds83}. When considering a passive scalar 
$f:[0,\infty)\times \T\times \R\to \R$, advected by a shear $u:\R\to\R$ and diffused, one would like to study in a quantitative way the long-time behavior
of solutions to the drift-diffusion equation
\begin{align}\label{eq:DD}
\begin{cases}
\de_t f+u\de_xf=\nu \Delta f, &\quad \text{in } (0,\infty)\times \T\times \R,\\
f(0,x,y)=f^{in}(x,y),  &\quad\text{in } \T\times \R.
\end{cases}
\end{align}
Here, $\nu>0$ is the diffusivity coefficient, proportional to the inverse P\'eclet number, and $f^{in}$ is a given mean-free initial datum.
The case of the Couette flow $u(y)=y$ is an enlightening example: the equation can be solved explicitly using the 
Fourier transform, giving
\begin{align}\label{eq:couettesol}
\ff(t,\ell,\eta)=\widehat{f^{in}}(\ell,\eta+\ell t) \exp\left\{-\nu\int_0^t \left[\ell^2+|\eta+\ell t- \ell\tau|^2\right]\dd \tau\right\}, \qquad (\ell,\eta)\in \Z\times \R.
\end{align}
The term $\widehat{f^{in}}(\ell,\eta+\ell t)$ is due to \emph{inviscid mixing}. For $\ell\neq 0$, it implies a transfer of information to high frequencies,
producing, even when $\nu=0$, a decay of solutions proportional to $1/t$ in the negative $H^{-1}_{y}$ Sobolev norm, namely
\begin{align}\label{eq:mixcouetteFour}
\|\ff(t,\ell)\|_{H^{-1}_y}:=\int_{\R} \frac{|\ff(t,\ell,\eta)|^2}{\ell^2+\eta^2}\dd \eta\leq \frac{2}{\sqrt{1+(\ell t)^2}} \|\widehat{f^{in}}(\ell)\|_{H^{1}_y},\qquad \forall t\geq0.
\end{align}
Consequently, we find
\begin{align}\label{eq:mixcouette}
 \qquad\|f_{\neq}(t)\|_{\dot{H}^{-1}}\leq \frac{2}{\sqrt{1+t^2}} \|f^{in}_{\neq}\|_{\dot{H}^{1}},\qquad \forall t\geq0,
\end{align}
where $f_{\neq}$ denotes the projection of $f$ onto non-zero modes in $x$, i.e. $f$ minus its $x$-average.
Note that the $x$-average of the equation is a conserved quantity for \eqref{eq:DD} when $\nu=0$, so there is no hope 
of decay for the $\ell=0$ mode.
On the other hand,
for $\nu>0$, the (super) exponential factor in \eqref{eq:couettesol} causes a decay in $L^2$ of the type
\begin{align}\label{eq:encouetteFour}
\|\ff(t,\ell)\|_{L^2_\eta}\leq  \e^{-\frac{1}{12} \nu \ell^2t^3}\|\widehat{f^{in}}(\ell)\|_{L^2_\eta},\qquad \forall t\geq0.
\end{align}
In particular, summing over all $\ell\neq 0$, we obtain
\begin{align}\label{eq:encouette}
\|f_{\neq}(t)\|_{L^2}\leq \e^{-\frac{1}{12} \nu t^3} \|f^{in}_{\neq}\|_{L^2},\qquad \forall t\geq0.
\end{align}
We refer to this estimate as \emph{enhanced diffusion}, as it implies homogenization at a time-scale $O(\nu^{-\frac13})$, hence
much shorter than the diffusive time-scale $O(\nu^{-1})$. Again, the $\ell=0$ mode satisfies the one-dimensional heat equation, so 
the restrictions to all nonzero modes in $x$ is necessary. The intriguing fact about the Couette flow is that the two estimates
\eqref{eq:mixcouette} and \eqref{eq:encouette} can be combined in a single one, in the sense that one can prove that
\begin{align}\label{eq:enMIXcouette}
\|f_{\neq}(t)\|_{\dot{H}^{-1}}\leq \frac{2\e^{-\frac{1}{12} \nu t^3}}{\sqrt{1+t^2}} \|f^{in}_{\neq}\|_{\dot{H}^1},\qquad \forall t\geq0.
\end{align}
The above \eqref{eq:enMIXcouette} is what we refer to as a \emph{stable mixing estimate} in the infinite P\'eclet number limit, since
there is no harm in setting $\nu=0$ and still obtaining non-trivial information about $f$, unlike in \eqref{eq:encouette}. Needless to say, 
having the explicit solution \eqref{eq:couettesol} at our disposal is crucial in deducing such estimate, and in fact this is essentially what Kelvin 
did in \cite{Kelvin87} back in 1887. To this date, \eqref{eq:enMIXcouette} remains the only stable mixing estimate available in the literature.

\subsection{The main results}
The goal of this paper is to prove stable mixing estimates for \eqref{eq:DD} for a large class of strictly monotone shear flows. Assume
that $u\in C^3$ and that there exists $\UU\geq1$ such that
\begin{align}\tag{H}\label{eq:shear}
\frac{1}{\UU} \leq u'(y), \qquad \frac{|u''(y)|}{u'(y)}\leq \UU,\ \qquad \frac{|u'''(y)|}{u'(y)}\leq \UU,\qquad \forall y\in \R.
\end{align}
Let $f$ be the solution to \eqref{eq:DD} or to the hypoelliptic drift-diffusion equation 
\begin{align}\label{eq:ADE}
\begin{cases}
\de_t f+u\de_xf=\nu \de_{yy} f, &\quad \text{in } (0,\infty)\times \T\times \R,\\
f(0,x,y)=f^{in}(x,y),  &\quad\text{in } \T\times \R.
\end{cases}
\end{align}
Consider the $L^2$-weighted norm defined by
\begin{equation}\label{eq:Xnorm}
	\|f\|_{u'}^2:=\|f\|_{L^2}^2+\|u'f\|^2_{L^2}.
\end{equation}
Throughout the article, the initial datum $f^{in}$ is required to satisfy $\|f^{in}_{\neq}\|_{u'}<\infty$.
Our main result is a detailed study of the longtime dynamics of  \eqref{eq:ADE}, including a stable mixing estimate resembling \eqref{eq:enMIXcouette}.
\begin{theorem}\label{thm:main}
There exist $C_0\geq 1$ and $\eps_0,\nu_0\in(0,1)$ only depending on $\UU$ such that if $\nu\in[0, \nu_0]$ we have the enhanced diffusion estimate
\begin{align}\label{eq:engen}
\|f_{\neq}(t)\|_{u'}\leq C_0 \e^{-\eps_0 \nu^{1/3} t}\|f^{in}_{\neq}\|_{u'},\qquad \forall t\geq 0,
\end{align}
and the stable mixing estimate
\begin{align}\label{eq:enMIXmono}
\|f_{\neq}(t)\|_{\dot{H}^{-1}}\leq C_0\frac{ \e^{-\eps_0 \nu^{1/3} t}}{\sqrt{1+t^2}}\left[\|f^{in}_{\neq}\|_{u'}+\|\de_y f^{in}_{\neq}\|_{u'}\right],\qquad  \forall t\geq 0.
\end{align}
In particular, if $\nu=0$ we obtain the inviscid mixing estimate
\begin{align}\label{eq:mixgen}
\|f_{\neq}(t)\|_{\dot{H}^{-1}}\leq \frac{C_0}{\sqrt{1+t^2}}\left[\|f^{in}_{\neq}\|_{u'}+\|\de_y f^{in}_{\neq}\|_{u'}\right],\qquad  \forall t\geq 0.
\end{align}
All the constants can be computed explicitly.
\end{theorem}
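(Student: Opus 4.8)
The plan is to work mode by mode in $x$. Writing $f_{\neq}=\sum_{\ell\neq0}f_\ell(t,y)\e^{i\ell x}$, each mode of \eqref{eq:ADE} solves $\de_tf_\ell+i\ell u(y)f_\ell=\nu\de_{yy}f_\ell$, and \eqref{eq:DD} is the same with the additional harmless dissipation $-\nu\ell^2f_\ell$, so it suffices to treat \eqref{eq:ADE}; only $\ell\neq0$ is relevant, and then $|\ell|\geq1$, which is what makes the rate $\nu^{1/3}$ uniform. The algebraic backbone is the vector field $J:=\de_y+t\,u'(y)\de_x$, which commutes with the transport operator, $[J,\de_t+u\de_x]=0$, and whose commutator with the diffusion, $[J,\nu\de_{yy}]=-\nu t\,(u'''\de_x+2u''\de_{xy})$, is lower order with coefficients tame \emph{relative to} $u'$ by \eqref{eq:shear}. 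On each mode $(Jf)_\ell=\de_yf_\ell+i\ell t\,u'f_\ell$, and the phase conjugation $h_\ell:=\e^{i\ell t\,u(y)}f_\ell$ satisfies $|h_\ell|=|f_\ell|$ and $|\de_yh_\ell|=|(Jf)_\ell|$ pointwise, so controlling $\|(Jf)_\ell\|_{L^2}$ amounts to controlling one \emph{good} $y$-derivative of the solution, namely the derivative in the frame moving with the flow.

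The first step is the enhanced diffusion estimate \eqref{eq:engen}, by an adaptation of the hypocoercivity method. For each $\ell\neq0$ I would build an energy functional $\Phi_\ell(t)$ out of $\|f_\ell\|_{L^2}^2$, $\|u'f_\ell\|_{L^2}^2$, $\|\de_yf_\ell\|_{L^2}^2$, $\|u'\de_yf_\ell\|_{L^2}^2$, the mixed term $\Re\jap{i\ell u'f_\ell,\de_yf_\ell}$ and its $u'$-weighted analogue, with coefficients chosen as the powers of $\nu$ and $|\ell|$ dictated by the diffusive scaling. The weight $u'$ is forced on us: diffusion applied to $u'f_\ell$ generates commutator terms $\nu u'''f_\ell$ and $\nu u''\de_yf_\ell$ controllable only against $u'$ via \eqref{eq:shear}, while $u'\geq\UU^{-1}$ is exactly what makes the commutator $[\de_y,i\ell u]=i\ell u'$ coercive. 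Tuning the parameters so that $\Phi_\ell$ is comparable to $\|f_\ell\|_{u'}^2$ uniformly in $\ell,\nu$ and $\ddt\Phi_\ell\leq-2\eps_0(\nu\ell^2)^{1/3}\Phi_\ell$, and summing over $\ell\neq0$ using $|\ell|\geq1$, gives \eqref{eq:engen}.

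The second, and main, step is the corresponding estimate for $Jf$. By the commutation identity,
\begin{align}
\de_t(Jf)_\ell+i\ell u\,(Jf)_\ell-\nu\de_{yy}(Jf)_\ell=-\nu t\,\big(i\ell u'''f_\ell+2i\ell u''\de_yf_\ell\big),\qquad(Jf)_\ell\big|_{t=0}=\de_yf_\ell^{in},
\end{align}
so applying the functional of Step~1 to $(Jf)_\ell$ — call the result $\Phi_\ell^J$ — gives $\ddt\Phi_\ell^J\leq-2\eps_0(\nu\ell^2)^{1/3}\Phi_\ell^J+\mathcal E_\ell(t)$. The difficulty is to bound the forcing $\mathcal E_\ell$ without losing powers of $\nu$ or $|\ell|$: the naive bound on $\de_yf_\ell$ grows like $\nu^{-1/3}$ (it is a \emph{bad} derivative), so one must instead substitute $\de_yf_\ell=(Jf)_\ell-i\ell t\,u'f_\ell$, turning the forcing into a term linear in $(Jf)_\ell$ itself plus the Couette-scale terms $\nu\ell^2t^2\,u'u''f_\ell$ and $\nu\ell t\,u'''f_\ell$; writing $u'u''=\tfrac12((u')^2)'$, $u'''=(u'''/u')u'$, integrating by parts, and using all three bounds in \eqref{eq:shear}, one reduces $\mathcal E_\ell$ to singly-$u'$-weighted norms of $f_\ell$ (which decay as in Step~1) and of $(Jf)_\ell$ (to be bootstrapped), multiplied by prefactors $\nu|\ell|^2t^2$, $\nu|\ell|t$, $\nu t$. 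These prefactors grow in $t$, but their time-integrals against the enhanced-dissipation rate $(\nu\ell^2)^{1/3}$ — e.g. $\int_0^\infty\nu\ell^2t^2\,\e^{-(\nu\ell^2)^{1/3}t}\,\dd t$ — are bounded uniformly in $\ell,\nu$; a Grönwall (or combined-functional) argument then yields $\|Jf_{\neq}(t)\|_{u'}\leq C_0\,\e^{-\eps_0\nu^{1/3}t}\big(\|\de_yf^{in}_{\neq}\|_{u'}+\|f^{in}_{\neq}\|_{u'}\big)$, the $\|f^{in}\|_{u'}$ term being exactly what the forcing injects.

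The last step converts $L^2$ control of $f$ and $Jf$ into $\dot H^{-1}$ decay. For $\ell\neq0$, using $f_\ell=\e^{-i\ell t\,u}h_\ell$ and integrating by parts in the oscillatory integral, $\jap{f_\ell,\psi}=(i\ell t)^{-1}\int\e^{-i\ell t\,u}\,\de_y\!\big((u')^{-1}h_\ell\bar\psi\big)$ when $|\ell|t>1$ (and $\|f_\ell\|_{\dot H^{-1}_y}\leq|\ell|^{-1}\|f_\ell\|_{L^2}$ when $|\ell|t\leq1$), together with $(u')^{-1}\leq\UU$ and $|u''|(u')^{-2}\leq\UU^2$ from \eqref{eq:shear}, yields $\|f_\ell\|_{\dot H^{-1}_y}\leq C_{\UU}(1+t^2)^{-1/2}\big(\|(Jf)_\ell\|_{L^2}+\|f_\ell\|_{L^2}\big)$. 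Squaring and summing over $\ell\neq0$ gives $\|f_{\neq}\|_{\dot H^{-1}}\leq C_{\UU}(1+t^2)^{-1/2}\big(\|Jf_{\neq}\|_{L^2}+\|f_{\neq}\|_{L^2}\big)$; combining with \eqref{eq:engen} and the Step~2 bound gives \eqref{eq:enMIXmono}, and $\nu=0$ gives \eqref{eq:mixgen}. Every step is an explicit energy or integration-by-parts inequality, so all constants can be tracked; the delicate point throughout is the power-counting in the bound for $\mathcal E_\ell$ in Step~2 together with the sharp tuning of $\Phi_\ell$ in Step~1.
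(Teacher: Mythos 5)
Your overall architecture --- a hypocoercivity functional for $f$, the same functional applied to $Jf$, and a conversion lemma bounding $kt\|f_k\|_{\dot H^{-1}}$ by $\|f_k\|+\|Jf_k\|$ --- coincides with the paper's (your Steps 1 and 3 are essentially Proposition \ref{prop:Phi-ODE} and Lemma \ref{lem:boundbelow}). The genuine gap is in Step 2, in how you propose to control the commutator forcing $\nu[J,\de_{yy}]f=-\nu t\left(u'''\de_xf+2u''\de_{xy}f\right)$. Your plan --- eliminate the bad derivative via $\de_yf_\ell=(Jf)_\ell-i\ell t\,u'f_\ell$ and then integrate the resulting prefactors $\nu\ell^2t^2,\ \nu\ell t$ against the decay rate --- trades a bad derivative for a bad weight: each such substitution inserts a factor of $u'$, which is unbounded. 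In the plain $L^2$ balance of $Jf$ the weights still split as $u'\times u'$ over the two factors, but in the weighted components of the functional (the analogues of $\gamma\|u'\de_x Jf\|^2$ and $\alpha\|\de_yJf\|^2$) one meets terms such as $\nu\gamma\,\ell^3t^2\l (u')^2u''f,\,u'\de_xJf\r$, whose total weight is bounded only by $\UU (u')^4$ and therefore cannot be split into two singly-$u'$-weighted factors; your claimed reduction of $\mathcal E_\ell$ to singly-$u'$-weighted norms of $f_\ell$ and $(Jf)_\ell$ fails there. If one instead converts back using $i\ell t\,u'f=Jf-\de_yf$, one is left with $\nu\gamma\,\ell t\,\|u'\de_{xy}f\|\,\|u'\de_xJf\|$, where the growing factor $\ell t$ multiplies a quantity that is controlled only in a time-integrated, dissipative sense, so a Gr\"onwall with integrable coefficients does not close either. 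The paper resolves this by going in the opposite direction throughout: it writes $t\de_x=(u')^{-1}(J-\de_y)$, so that $[J,\de_{yy}]f=\frac{u'''}{u'}(Jf-\de_yf)-2\de_y\left(\frac{u''}{u'}(Jf-\de_yf)\right)$ with \emph{bounded} coefficients and \emph{no explicit} $t$, and then absorbs the resulting errors $\nu\|\de_yf\|^2$, $\nu\aJ\|\de_{yy}f\|^2$, $\nu\cJ\|u'\de_{xy}f\|^2$ into the dissipation terms deliberately retained on the left of \eqref{eq:ODE5}, by working with the combined functional $\Phi_k+\delta_0\J_k$ for a small $\delta_0=\delta_0(\UU)$. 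That absorption mechanism, not time-integration of growing prefactors, is what closes the estimate, and it is absent from your sketch.

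A second, smaller gap concerns the initial data. Your Gr\"onwall starts the $J$-functional at $t=0$, where $Jf(0)=\de_yf^{in}$ and hence the $\aJ\|\de_yJf\|^2$ component equals $\aJ\|\de_{yy}f^{in}\|^2$ (and the $\cJ$ component involves $\|u'\de_{xy}f^{in}\|^2$), which are not controlled by $\|f^{in}\|_{u'}+\|\de_yf^{in}\|_{u'}$ for fixed $\nu>0$. The paper avoids this with a mean-value/parabolic-smoothing argument on $(0,T_{\nu,k})$, $T_{\nu,k}=\nu^{-1/3}k^{-2/3}$: it produces a time $t_\star\leq T_{\nu,k}$ at which $\aJ\left[\|\nabla f(t_\star)\|^2+\delta_0\|\nabla Jf(t_\star)\|^2\right]$ is bounded by the admissible data norms, runs the exponential decay of $\Phi+\delta_0\J$ only from $t_\star$ onward, and uses a direct energy bound on $[0,T_{\nu,k}]$. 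You would need this step (or a density argument) to obtain the dependence on the data stated in the theorem.
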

Before we proceed to explain the main ideas of the proofs and expand on the concepts of mixing and enhanced diffusion, a few remarks
are in order.

\begin{remark}[On the class of shear flows]
In addition to strict monotonicity, assumption \eqref{eq:shear} essentially imposes a growth condition at $|y|\to\infty$ on $u$ and its derivatives.
Besides small perturbations of the Couette flow, such as
\begin{align}
u(y)=y+\frac12\sin y,
\end{align}
it allows up to exponential growth at infinity, including
\begin{align}
u(y)=y+\e^y, \qquad \text{and}\qquad  u(y)=y(1+|y|^{n-1}),
\end{align}
for $n\geq 3$ (for $n=2$, one has to smooth out at $y=0$, but of course quadratic grow at infinity is allowed). What is not included
is highly oscillatory shear flows, such as
\begin{align}
u(y)=y+\frac12\int_0^y\sin (z^2)\dd z,
\end{align}
for which $\frac12\leq u'\leq \frac32$, but such that $u''(y)=y\cos (y^2)$, violating  \eqref{eq:shear}.
\end{remark}

\begin{remark}[On the weighted norm]
The choice of the norm \eqref{eq:Xnorm} appears naturally in the proof, and precisely accounts for the possible growth at infinity of $u'$.
In the particular case $u(y)=y^2$, this choice has been made in \cite{CZEW19} for proving an enhanced dissipation estimate of the type
\eqref{eq:engen} for the vorticity in the Navier-Stokes equations near the Poiseuille flow. Usually, this choice implies a logarithmic loss in the decay rate \eqref{eq:engen}, while in our case, due to strict monotonicity, this loss can be avoided.  Notice that if $u'$ is assumed to be bounded, the
norm simply reduces to the standard $L^2$ norm.
\end{remark}

\begin{remark}[On the super exponential decay]
The enhanced diffusion estimate \eqref{eq:engen} is a semigroup estimate for the linear operator appearing in \eqref{eq:ADE}.
Whether a super-exponential decay of the type present in \eqref{eq:encouette} or \eqref{eq:enMIXcouette} holds for flows different than the
Couette flow is an open question, and it is related to the spectral properties of the linear operator $L=iku(y)-\nu\de_{yy}$. In particular,
when $u(y)=y$, the operator $L$ has empty spectrum, which is a necessary condition for a semigroup to decay faster than exponentially.
\end{remark}

\begin{remark}[Hypoellipticity]
All the estimates of Theorem \ref{thm:main} can be made more precise by stating them for each $x$-Fourier mode, similarly 
to \eqref{eq:mixcouetteFour} and \eqref{eq:encouetteFour}. In particular, it can be proven that for each $\ell\neq 0$ there holds
\begin{align}\label{eq:engenFour}
\|\ff(t,\ell)\|_{u'}\leq C_0 \e^{-\eps_0 \nu^{1/3}|\ell|^{2/3} t}\|\widehat{f^{in}}(\ell)\|_{u'},
\end{align}
for every $t\geq 0$. As noted in \cite{BCZ15}, this estimate is particularly relevant for the hypoelliptic equation \eqref{eq:ADE}, since
it provides a quantification of the instantaneous regularization from $L^2$ to Gevrey-$\frac32$ even if no diffusion in $x$ is present 
in the equation. For a precise statement, see Theorem \ref{thm:mainFour} below.
\end{remark}

\begin{remark}[The Batchelor scale]
The characteristic filamentation length
scale of $f$ can be defined in terms of the ratio
\begin{align}
\lambda(t)=\frac{\|f(t)\|_{\dot{H}^{-1}}}{\|f(t)\|_{L^2}}.
\end{align}
In many situations it is expected (see \cite{MD18} and references therein) that $\lambda(t)\to c_{\nu}>0$, as $t\to \infty$ (possibly in
a time-averaged sense) and whenever $\nu>0$. Our result is not in contrast with this prediction, as it is conceivable to think that 
the constant $\eps_0$ appearing in \eqref{eq:engen} is in fact bigger than that appearing in \eqref{eq:enMIXmono}. Moreover, we
only provide upper bounds, which do not say much about the behavior of $\lambda(t)$. However, it is worth keeping in mind that
the exponent $1/3$ is optimal, and that when $\nu=0$ the result implies that  $\lambda(t)\to0$ as $t\to\infty$ whenever $f^{in}_{\neq}\neq0$.
\end{remark}

\subsection{Mixing and enhanced diffusion}
One of the main effects of transport is the generation of an averaging effect that creates smaller and smaller spatial scales as time increases.
Thinking on the Fourier side, this is equivalent to the accumulation of energy in higher and higher Fourier modes, as precisely described by
the term $\widehat{f^{in}}(\ell,\eta+\ell t)$ in \eqref{eq:couettesol} in the specific example of the Couette flow. It is therefore not surprising that
a way to quantify this is through the decay of a negative Sobolev norm, as in \eqref{eq:mixgen}. In the context of passive scalars, this
point of view was introduced in \cite{LTD11}, and it is deeply connected with the regularity of transport equations  
\cites{Jabin16,YZ17,Bressan03, CDL08}, the quantification and lower bounds on mixing rates  \cites{IKX14,Seis13,LLNMD12,ACM16,CLS17,Zillinger18, BCZ15}, and the inviscid damping in the 
two-dimensional Euler equations linearized around shear flows 
\cites{Zillinger14,Zillinger15,WZZ15,WZZkolmo17,WZZ17,GNRS18, CZZ18,Zcirc17,BCZV17}.

On the other hand, once a passive scalar is well 
concentrated at high frequencies, diffusion becomes the dominant effect. Understanding at which $\nu$-dependent time-scale 
this happens is fundamental in order  to capture correctly the dynamics. An enhanced diffusion estimate of the type \eqref{eq:engen}
precisely tells us that on a short time-scale $O(\nu^{-1/3})$, the solution becomes $x$-independent, while its $x$-average remains order 1
until the diffusive time-scale $O(\nu^{-1})$. This separation of time-scales has been the object of study in a vast part of the fluid mechanics
literature  \cites{BajerEtAl01,DubrulleNazarenko94,RhinesYoung83,LatiniBernoff01}. The rigorous mathematical study has been initiated
in \cite{CKRZ08}, and has become of great interest in the last years \cites{Deng2013,BGM15III,CKRZ08,BCZGH15,BCZ15,BMV14,BVW16,BGM15I,LiWeiZhang2017,BW13,Gallay2017,IMM17, CZDE18,
WZZkolmo17,BGM15II,WZ18,LWZ18}.

\subsection{Main ideas}
The proof of the enhanced diffusion estimate \eqref{eq:engen} is based on a modification of the so-called hypocoercivity method \cite{Villani09}.
In the context of fluid mechanics, it was successfully used for the first time in \cite{BW13} to study the Kolmogorov flow $u(y)=\sin y$, and then for general shear flows in \cite{BCZ15}
on a periodic domain or a bounded periodic channel. Other examples include circular flows \cite{CZD19}, the 2D Navier-Stokes 
equations \cites{CZEW19,WZ19} and the Boussinesq equations \cite{TW19}. Although the proof is very simple due to strict monotonicity,
a proof in the spatial domain $\T\times\R$ is not present in the literature, and we decided to include it here to make the paper self-contained.

Estimate \eqref{eq:enMIXmono} is the real novelty of this article. It relies on the observation that the vector field
\begin{align}\label{eq:vectorfie}
J=\de_y+tu'\de_x
\end{align}
commutes with the transport part of \eqref{eq:ADE}. Hence, the idea is to apply the hypocoercivity method not only to $f$, but also to $Jf$.
In view of the fact that the commutator $[J,\de_{yy}]\neq 0$, the scheme is much more involved and has to be carried out simultaneously
for $f$ and $Jf$. Once a proper enhanced diffusion estimate of the type \eqref{eq:engen} is proven for both $f$ and $Jf$, one simply
uses the fact that (see Lemma \ref{lem:boundbelow})
\begin{align}
t\| f \|_{\dot{H}^{-1}}\lesssim \|f\|_{L^2}+\|Jf\|_{L^2}.
\end{align}
The above inequality holds thanks to the monotonicity assumption in \eqref{eq:shear}. The vector field \eqref{eq:vectorfie} was used in 
the context of inviscid damping for the for the 2D $\beta$-plane equation \cite{WZZ18}.

\subsection{Notation and conventions}
All the proofs are carried out for the harder case of the hypoelliptic equation \eqref{eq:ADE}.
Throughout the paper, $\|\cdot\|$ and $\l\cdot,\cdot\r$ denote the standard $L^2$ norm and scalar product respectively. 
Via an expansion of $f$  as a Fourier series in the $x$ variable, namely
\begin{align}
f(t,x,y)=\sum_{\ell\in \Z} \ff(t,\ell,y)\e^{i\ell x}, \qquad \ff(t,\ell,y)=\frac{1}{2\pi}\int_\T f(t,x,y)\e^{-ikx}\dd x,
\end{align} 
for $k\in\N_0$ we set
\begin{equation}\label{eq:band}
 f_k(t,x,y):=\sum_{|\ell|=k} \ff(t,\ell,y)\e^{i \ell x}.
\end{equation}
This way we may express 
\begin{align}
f(t,x,y)=\sum_{k\in\N_0}f_k(t,x,y)
\end{align} 
as a sum of \emph{real-valued} functions $f_k$ that are localized in $x$-frequency on a single band $\pm k$, $k\in\N_0$. We thus see that \eqref{eq:ADE} decouples in $k$ and becomes an infinite system of one-dimensional equations. 

Given two operators $A,B$, the symbol $[A,B]=AB-BA$ denotes the commutator between the two. Unless otherwise stated, all the constants
will be \emph{independent} of $\nu$ and $k$, but may depend on $\UU$.

\section{Hypocoercivity for the passive scalar}
In this section, we tackle the question of enhanced diffusion for $f$. This is carried out by the so-called hypocoercivity method 
and the construction of a suitable energy functional. Let
\begin{align}\label{eq:alphabeta}
\alpha= \alpha_0 \frac{\nu^{2/3}}{k^{2/3}}, \qquad \beta= \beta_0 \frac{\nu^{1/3}}{k^{4/3}}, \qquad\gamma= \gamma_0 \frac{1}{k^2}.
\end{align}
where
\begin{align}\label{eq:alphabeta0}
\aJ_0= \frac{1}{4\times 3504 \UU^6},,\qquad \bJ_0=4\aJ_0^2,\qquad \cJ_0=128\aJ_0^3.
\end{align}
For each $k\in \N$, define the functional
\begin{align}\label{eq:Phifunc}
\Phi_k=\frac12\left[\| f_k\|^2+\alpha\|\de_{y} f_k\|^2+2\beta\l u'\de_x  f_k, \de_y f_k\r+\gamma\| u'\de_x  f_k\|^2\right],
\end{align}
where $f_k$ is given by \eqref{eq:band}. It is important to notice that the choice \eqref{eq:alphabeta} and \eqref{eq:alphabeta0}
implies that $\Phi_k$ is coercive. Indeed, since
\begin{align}
2\beta|\l u'\de_x  f_k, \de_y f_k\r|\leq 2\beta \|u'\de_x f_k\|\|\de_yf_k\|\leq \frac{\alpha}{2} \|\de_yf_k\|^2 +\frac{2\beta^2}{\alpha}\| u'\de_xf_k\|^2,
\end{align}
and, in particular,
\begin{align}\label{eq:constraint1}
\frac{\beta^2}{\alpha \gamma}\leq \frac14,
\end{align}
we find that
\begin{align}\label{eq:equiv}
\frac14\left[2\| f\|^2+\alpha\|\de_{y} f\|^2+\gamma \| u'\de_x  f\|^2\right] \leq\Phi_k\leq \frac14\left[2\| f\|^2+3\alpha\|\de_{y} f\|^2+3\gamma \| u'\de_x  f\|^2\right].
\end{align}
Then main result of this section is the derivation of a suitable differential inequality 
for $\Phi_k$, contained in the proposition below.

\begin{proposition}\label{prop:Phi-ODE}
Assume that $\nu k^{-1}\leq1$. For every $t\geq 0$, there holds
\begin{align}\label{eq:ODE5}
\ddt \Phi_k +2\eps_0\nu^{1/3}k^{2/3}\Phi_k
+\frac{\nu}{4}\|\de_{y}  f_k\|^2+\frac{\nu\alpha}{2}\|\de_{yy}  f_k\|^2+\frac{\nu\gamma}{2}\| u' \de_{xy} f_k\|^2\leq 0,
\end{align}
where
\begin{align}\label{eq:eps0}
\eps_0:=\frac{\beta_0 }{32\UU^2}
\end{align}
and $\beta_0$ is given in \eqref{eq:alphabeta0}.
\end{proposition}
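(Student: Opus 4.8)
The plan is a direct weighted energy estimate. Working on a fixed band $|\ell|=k$ and treating $\de_x$ as multiplication by $i\ell$, so that $\|\de_x f_k\|=k\|f_k\|$ and $\|u'\de_x f_k\|^2=k^2\|u'f_k\|^2$, I would differentiate the four pieces of $\Phi_k$ along \eqref{eq:ADE}; all integrations by parts in $y$ are boundary-term free by the finiteness of the weighted norms and the instantaneous parabolic smoothing of \eqref{eq:ADE}, which I would dispatch with a routine density argument. Using $\l ug,\de_x g\r=0$ for real $g$, the $L^2$ piece gives $\ddt\tfrac12\|f_k\|^2=-\nu\|\de_y f_k\|^2$; the piece $\tfrac\alpha2\|\de_y f_k\|^2$ gives $-\alpha\nu\|\de_{yy}f_k\|^2-\alpha\l u'\de_x f_k,\de_y f_k\r$; in the mixed piece $\beta\l u'\de_x f_k,\de_y f_k\r$ all transport contributions cancel after one integration by parts in $x$, leaving the crucial coercive gain $-\beta\|u'\de_x f_k\|^2$ together with $-2\beta\nu\l u'\de_{xy}f_k,\de_{yy}f_k\r-\beta\nu\l u''\de_x f_k,\de_{yy}f_k\r$; and $\tfrac\gamma2\|u'\de_x f_k\|^2$ gives $-\gamma\nu\|u'\de_{xy}f_k\|^2-2\gamma\nu\l u'u''\de_x f_k,\de_{xy}f_k\r$ (all stray terms $\l u''\de_{xy}f_k,\de_y f_k\r$ vanish by $x$-periodicity). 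Summing, $\ddt\Phi_k$ is the dissipative backbone $-\nu\|\de_y f_k\|^2-\alpha\nu\|\de_{yy}f_k\|^2-\beta\|u'\de_x f_k\|^2-\gamma\nu\|u'\de_{xy}f_k\|^2$ plus these four error terms.

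Next I would absorb the errors by Young's inequality, using \eqref{eq:shear} and the scaling relations encoded in \eqref{eq:alphabeta}--\eqref{eq:alphabeta0}. For the $u''$-term I would integrate by parts once more, $\l u''\de_x f_k,\de_{yy}f_k\r=-\l u'''\de_x f_k,\de_y f_k\r$, and bound it by $\UU\|u'\de_x f_k\|\|\de_y f_k\|$ via $|u'''|\leq\UU u'$ — this is precisely why the third derivative enters \eqref{eq:shear}; since the term carries a factor $\nu$ while the gain $\beta\|u'\de_x f_k\|^2$ is $\nu$-free, it is harmlessly absorbed once $\nu k^{-1}\leq1$, costing a negligible part of $\beta\|u'\de_x f_k\|^2$ and of $\nu\|\de_y f_k\|^2$. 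The $u'u''$-term, bounded by $\UU\|u'\de_x f_k\|\|u'\de_{xy}f_k\|$, similarly costs a negligible fraction of $\beta\|u'\de_x f_k\|^2$ and up to $\tfrac{\gamma\nu}{4}\|u'\de_{xy}f_k\|^2$. The indefinite term $\alpha\l u'\de_x f_k,\de_y f_k\r$ splits into $\tfrac\beta4\|u'\de_x f_k\|^2+\tfrac\nu4\|\de_y f_k\|^2$ thanks to the identity $\alpha^2/\beta=\nu/4$ built into \eqref{eq:alphabeta0}. The delicate one is $2\beta\nu\l u'\de_{xy}f_k,\de_{yy}f_k\r$: Young's inequality fits it into $\tfrac{\gamma\nu}{4}\|u'\de_{xy}f_k\|^2+\tfrac{\alpha\nu}{2}\|\de_{yy}f_k\|^2$ if and only if $8\beta^2\leq\alpha\gamma$, and \eqref{eq:alphabeta0} yields exactly $\beta^2/(\alpha\gamma)=\tfrac18$ — this is where that choice of constants is spent. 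After the dust settles one retains, say, $\tfrac\beta4\|u'\de_x f_k\|^2$, $\tfrac\nu4\|\de_y f_k\|^2$, $\tfrac{\alpha\nu}{2}\|\de_{yy}f_k\|^2$, $\tfrac{\gamma\nu}{2}\|u'\de_{xy}f_k\|^2$, plus a fixed positive fraction of $\nu\|\de_y f_k\|^2$ still to spare.

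The final step, the only one that genuinely uses strict monotonicity, converts the leftover coercivity into the rate $\nu^{1/3}k^{2/3}$. Since $u'\geq1/\UU$ gives $\|f_k\|^2\leq\UU^2\|u'f_k\|^2$, and since $\tfrac\beta4 k^2=\tfrac{\bJ_0}{4}\nu^{1/3}k^{2/3}$, $\gamma k^2=\cJ_0$ and $\nu^{1/3}k^{2/3}\alpha=\aJ_0\nu$, one has
\begin{align}
2\eps_0\nu^{1/3}k^{2/3}\Phi_k\leq \eps_0\nu^{1/3}k^{2/3}\Big(\|f_k\|^2+\tfrac32\alpha\|\de_y f_k\|^2+\tfrac32\gamma k^2\|u'f_k\|^2\Big)\leq \tfrac\beta4\|u'\de_x f_k\|^2+\tfrac{\nu}{3}\|\de_y f_k\|^2,
\end{align}
where the first inequality is the upper bound in \eqref{eq:equiv} and the second reduces to the two numerical checks $\eps_0\UU^2+\tfrac32\eps_0\cJ_0\leq\tfrac{\bJ_0}{4}$ and $\eps_0\aJ_0\leq\tfrac29$, both comfortably satisfied by $\eps_0=\bJ_0/(32\UU^2)$ and the constants in \eqref{eq:alphabeta0}. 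Feeding this into $\ddt\Phi_k$ together with the retained $\tfrac\nu4\|\de_y f_k\|^2$, $\tfrac{\alpha\nu}{2}\|\de_{yy}f_k\|^2$, $\tfrac{\gamma\nu}{2}\|u'\de_{xy}f_k\|^2$ yields \eqref{eq:ODE5}.

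I expect the only real difficulty to be organizational: there is no individually hard estimate, but one must allocate the finitely many dissipative budgets so that every Young splitting closes simultaneously, and the tuning is not slack everywhere — the relation $\beta^2/(\alpha\gamma)=\tfrac18$ is saturated by the $\l u'\de_{xy}f_k,\de_{yy}f_k\r$ term. Conceptually, everything rests on the lower bound $u'\geq1/\UU$: it is what lets the single coercive term $\beta\|u'\de_x f_k\|^2$ control $\|f_k\|^2$ directly at the scale $\nu^{1/3}k^{2/3}$, without the logarithmic loss one would incur for non-monotone shear flows.
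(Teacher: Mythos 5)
Your proposal is correct and follows essentially the same hypocoercivity scheme as the paper: the same four energy balances (with the transport terms cancelling by $x$-periodicity and the commutator $[u\de_x,u'\de_x]=0$), the same Young-inequality absorptions governed by the built-in relations $\alpha^2/(\beta\nu)=\tfrac14$, $\beta^2/(\alpha\gamma)=\tfrac18$ and $\gamma_0/\beta_0\leq\tfrac{1}{12\UU^2}$, and the same final use of $u'\geq 1/\UU$ to convert the coercive term $\beta\|u'\de_x f_k\|^2$ into decay of $\Phi_k$ at rate $\nu^{1/3}k^{2/3}$ via \eqref{eq:equiv}. The only deviations are bookkeeping ones --- e.g.\ you integrate $\l u''\de_x f_k,\de_{yy}f_k\r$ by parts once more to $-\l u'''\de_x f_k,\de_y f_k\r$ instead of charging it to $\tfrac{\nu\alpha}{4}\|\de_{yy}f_k\|^2$, and you keep the $\l u'u''\de_x f_k,\de_{xy}f_k\r$ form rather than passing to $(u'u'')'$ --- and these close just as well.
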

As we shall see in a moment, the choice of the coefficients \eqref{eq:alphabeta0} and \eqref{eq:eps0} is an overkill in the
proof of Proposition \ref{prop:Phi-ODE}, but will be necessary for Proposition \ref{prop:J-ODE}. Since we do not want to
make two different choices in the parameters, we opted for a unified selection.

\subsection{Energy balances}\label{sub:balance}
We begin by computing the time derivative of $\Phi_k$. At this stage, we omit the subscript in all the functions, keeping in mind
the useful relation $\|\de_x f\|=k\|f\|$ in the estimates. Relying on the antisymmetry of the transport term, we test \eqref{eq:ADE}
by $f$ and derive the energy balance
\begin{align}\label{eq:energy}
\frac12\ddt\| f\|^2+\nu\|\de_{y}  f\|^2=0.
\end{align}
Regarding the $L^2$ balance for $\de_yf$, we obtain
\begin{align}\label{eq:alpha}
\frac12\ddt\|\de_{y} f\|^2+\nu\|\de_{yy}  f\|^2=-\l u'\de_x  f, \de_y f\r,
\end{align}
where the right-hand side above appears because of the commutator relation $[\de_y,u\de_x]=u'\de_x$. Turning
to the $\beta$-term in \eqref{eq:Phifunc}, we have
\begin{align}\label{eq:beta}
\ddt\l u'\de_x  f, \de_y f\r
&=\nu\left[ \l u'\de_{xyy} f , \de_y f\r+ \l u'\de_{x} f , \de_{yyy} f\r\right]-\l u'\de_x  (u\de_x f), \de_y f\r-\l u'\de_x  f, \de_y (u\de_xf)\r\notag\\
&=-2\nu\l u'\de_{xy}  f, \de_{yy} f\r-\nu\l u''\de_x  f, \de_{yy} f\r        - \| u'\de_x  f\|^2,
\end{align}
where we integrated by parts, used the antisymmetry of $u''\de_x$ and the crucial relation $[u\de_x,u'\de_x]=0$.
Finally, the time derivative of the $\gamma$-term in  \eqref{eq:Phifunc} can be computed as
\begin{align}
\frac12\ddt\| u'\de_x  f\|^2
&=\nu \l u'\de_{x},u'\de_{xyy}  f\r   -  \l u'\de_{x},u'\de_{x} (u\de_x  f)\r\notag\\
&=- \nu\| u' \de_{xy} f\|^2-2\nu \l u'u''\de_{x}f,\de_{xy}  f\r\notag \\
&=- \nu\| u' \de_{xy} f\|^2+\nu \l (u'u'')'\de_{x}f,\de_{x}  f\r.
\end{align}
Therefore,
\begin{align}\label{eq:gamma}
\frac12\ddt\| u'\de_x  f\|^2+\nu\| u' \de_{xy} f\|^2=\nu\l (u'u'')'\de_x f,\de_xf\r.
\end{align}
Collecting \eqref{eq:energy}, \eqref{eq:alpha}, \eqref{eq:beta} and \eqref{eq:gamma} we obtain
\begin{align}\label{eq:Phidiffeq}
&\ddt \Phi+\nu\|\de_{y}  f\|^2+\nu\alpha\|\de_{yy}  f\|^2
 +\beta \| u'\de_x  f\|^2+\nu\gamma\| u'\de_{xy} f\|^2=-\alpha\l u'\de_x  f, \de_y f\r\notag\\
 &\qquad\qquad-2\nu\beta\l u'\de_{xy}  f, \de_{yy} f\r-\nu\beta\l u''\de_x  f, \de_{yy} f\r+\nu\gamma\l (u'u'')'\de_x f,\de_xf\r.
\end{align}
We now proceed in estimating the error terms above, one by one.

\subsection{Error estimates and proof of Proposition \ref{prop:Phi-ODE}}
In what follows, we will repeatedly use the Cauchy-Schwarz and Young inequalities without mention.
We begin from the $\alpha$-term in the right-hand side of \eqref{eq:Phidiffeq}. We have
\begin{align}\label{eq:err1}
\alpha|\l u'\de_x  f, \de_y f\r|\leq \alpha \| u'\de_x  f \| \| \de_y f\| \leq  \frac{\nu}{2} \|\de_y f\|^2+\frac{\alpha^2}{2\nu} \| u'\de_x  f \|^2.
\end{align}
Since by \eqref{eq:alphabeta}-\eqref{eq:alphabeta0} we have
\begin{align}\label{eq:constraint2}
\frac{\alpha^2}{\beta\nu}\leq \frac{1}{2},
\end{align}
we find
\begin{align}\label{eq:err1sol}
\alpha|\l u'\de_x  f, \de_y f\r| \leq  \frac{\nu}{2} \|\de_y f\|^2+ \frac{\beta}{4} \| u'\de_x  f \|^2.
\end{align}
For the first of the $\beta$-terms in  \eqref{eq:Phidiffeq}, we have
\begin{align}\label{eq:err2}
2\nu\beta|\l u'\de_{xy}  f, \de_{yy} f\r|\leq 2\nu\beta\|u'\de_{xy}  f\|\| \de_{yy} f\|\leq \frac{\nu \alpha}{4}\|\de_{yy} f\|^2+ \frac{4\nu\beta^2}{\alpha}\|u'\de_{xy}  f\|^2,
\end{align}
so that since \eqref{eq:alphabeta0} implies that
\begin{align}\label{eq:constraint1bis}
\frac{\beta^2}{\alpha\gamma}= \frac{1}{8},
\end{align}
we obtain
\begin{align}\label{eq:err2sol}
2\nu\beta|\l u'\de_{xy}  f, \de_{yy} f\r|\leq \frac{\nu\alpha}{4}\|\de_{yy} f\|^2+ \frac{\gamma\nu}{2}\|u'\de_{xy}  f\|^2.
\end{align}
The second $\beta$-term can be instead estimated as
\begin{align}\label{eq:err3}
\nu\beta|\l u''\de_x  f, \de_{yy} f\r|\leq  \nu\beta\| u''\de_x  f\| \|\de_{yy} f\|\leq  \frac{\nu\alpha}{4}\|\de_{yy} f\|^2+\frac{\nu\beta^2}{\alpha}\| u''\de_x  f\|^2,
\end{align}
so that using \eqref{eq:constraint1bis} and \eqref{eq:shear} we have
\begin{align}\label{eq:err3sol}
\nu\beta|\l u''\de_x  f, \de_{yy} f\r|\leq \frac{\nu\alpha}{4}\|\de_{yy} f\|^2+\UU^2\frac{\nu\gamma}{8}\| u'\de_x  f\|^2.
\end{align}
Lastly, we have
\begin{align}\label{eq:err4sol}
\nu\gamma|\l (u'u'')'\de_x f,\de_xf\r|\leq 2\nu\UU^2\gamma \| u'\de_x  f \|^2.
\end{align}
Collecting \eqref{eq:err1sol}, \eqref{eq:err2sol}, \eqref{eq:err3sol}, \eqref{eq:err4sol} and using \eqref{eq:Phidiffeq}
we arrive at
\begin{align}\label{eq:ODE1}
&\ddt \Phi+\frac{\nu}{2}\|\de_{y}  f\|^2+\frac{\nu\alpha}{2}\|\de_{yy}  f\|^2
 +\frac{3\beta}{4} \| u'\de_x  f\|^2+\frac{\nu\gamma}{2}\| u'\de_{xy} f\|^2\leq 3\UU^2\nu\gamma \| u'\de_x  f \|^2.
\end{align}
Looking at the right-hand side of \eqref{eq:ODE1}, recalling that $\nu k^{-1}\leq1$ and using  \eqref{eq:alphabeta}, we have
\begin{align}
 3\UU^2\nu\gamma \| u'\de_x  f \|^2= 3\UU^2\frac{\gamma_0}{\beta_0}\left(\frac{\nu}{k}\right)^{2/3}\beta \| u'\de_x  f \|^2\leq \frac{\beta}{4} \| u'\de_x  f \|^2,
\end{align}
where we used that  from \eqref{eq:alphabeta0}  we have
\begin{align}\label{eq:constraint3}
\frac{\gamma_0}{\beta_0}=32\alpha_0\leq \frac{1}{12\UU^2}.
\end{align}
Thus, \eqref{eq:ODE1}  becomes
\begin{align}\label{eq:ODE2}
&\ddt \Phi+\frac{\nu}{2}\|\de_{y}  f\|^2+\frac{\nu\alpha}{2}\|\de_{yy}  f\|^2
 +\frac{\beta}{2} \| u'\de_x  f\|^2+\frac{\nu\gamma}{2}\| u' \de_{xy} f\|^2\leq 0.
\end{align}
In view of \eqref{eq:alphabeta} and \eqref{eq:shear}, this implies
\begin{align}
\ddt \Phi +\frac{\beta k^2}{4\UU^2} \|   f\|^2+\frac{\nu}{2}\|\de_{y}  f\|^2+\frac{\beta}{4} \| u'\de_x  f\|^2+\frac{\nu\alpha}{2}\|\de_{yy}  f\|^2+\frac{\nu\gamma}{2}\| u' \de_{xy} f\|^2\leq 0,
\end{align}
or, equivalently,
\begin{align}
&\ddt \Phi +\frac{\beta_0 }{8\UU^2}\nu^{1/3}k^{2/3}\left[2 \|f\|^2+\frac{2\UU^2}{\alpha_0\beta_0}\alpha\|\de_y  f\|^2+\frac{2\UU^2}{\gamma_0}\gamma\| u'\de_x  f\|^2\right]\notag\\
&\qquad\qquad+\frac{\nu}{4}\|\de_{y}  f\|^2+\frac{\nu\alpha}{2}\|\de_{yy}  f\|^2+\frac{\nu\gamma}{2}\| u' \de_{xy} f\|^2\leq 0.
\end{align}
In light of \eqref{eq:alphabeta0}, we have
\begin{align}\label{eq:constraint4}
\frac{2\UU^2}{\alpha_0\beta_0}\geq 3,\qquad \frac{2\UU^2}{\gamma_0}\geq 3.
\end{align}
Using \eqref{eq:equiv} we have
\begin{align}\label{eq:ODE4}
\ddt \Phi +\frac{\beta_0 }{4\UU^2}\nu^{1/3}k^{2/3}\Phi
+\frac{\nu}{4}\|\de_{y}  f\|^2+\frac{\nu\alpha}{2}\|\de_{yy}  f\|^2+\frac{\nu\gamma}{2}\| u' \de_{xy} f\|^2\leq 0.
\end{align}
In particular, this implies \eqref{eq:ODE5}, and the proof of Proposition \ref{prop:Phi-ODE} is concluded.

\section{Hypocoercivity and the vector field method}
Let us define the vector field
\begin{align}\label{eq:vectorfield}
J=\de_y+tu'\de_x,
\end{align}
and, for each $k\in\N$, the corresponding energy functional
\begin{align}\label{eq:Jfunc}
\J_k=\frac12\left[\| Jf_k\|^2+\aJ\|\de_y Jf_k\|^2+2\bJ\l u'\de_x  Jf_k, \de_y Jf_k\r+\cJ\| u'\de_x  Jf_k\|^2\right],
\end{align}
where the various coefficients are exactly the same as in \eqref{eq:alphabeta}-\eqref{eq:alphabeta0}. Thus, in particular, there holds
\begin{align}\label{eq:equivJ}
\frac14\left[2\| Jf_k\|^2+\alpha\|\de_{y}J f_k\|^2+\gamma \| u'\de_x  Jf_k\|^2\right] \leq\J_k\leq \frac14\left[2\|J f_k\|^2+3\alpha\|\de_{y} Jf_k\|^2+3\gamma \| u'\de_x J f_k\|^2\right].
\end{align}
The goal here is to prove a result analogous to Proposition \ref{prop:Phi-ODE}, but with a slightly more complicated form.

\begin{proposition}\label{prop:J-ODE}
Assume that $\nu k^{-1}\leq\nu_0$, where
\begin{align}\label{eq:nukrestr}
\nu_0:=  \left(\frac{\bJ_0}{4\times 7008 \UU^{8}}\right)^{3/2}.
\end{align}
For every $t\geq 0$, there holds
\begin{align}\label{eq:ODE11J}
\ddt \J_k +2\eps_0\nu^{1/3}k^{2/3} \J_k
\leq3504\nu \UU^{6}\left[ \|\de_yf_k\|^2 + \aJ \|\de_{yy}f_k\|^2+\cJ \|u'\de_{xy}f_k\|^2\right],
\end{align}
where $\eps_0$ is the same as in \eqref{eq:eps0} and $\beta_0$ is given in \eqref{eq:alphabeta0}.
\end{proposition}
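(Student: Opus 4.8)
The plan is to rerun the hypocoercivity scheme of Section~2 with $g:=Jf_k$ in place of $f_k$, and to absorb the commutator error created by $[J,\de_{yy}]\neq0$ into the dissipation terms for $f_k$ — which is exactly why those terms reappear on the right-hand side of \eqref{eq:ODE11J}. \textbf{Step 1 (the equation for $g$).} First I record two commutator identities. A direct computation gives $[\de_t+u\de_x,J]=0$: indeed $\de_t(Jf)=J\de_tf+u'\de_xf$ and $J(u\de_xf)=u'\de_xf+u\de_x(Jf)$, so the two $u'\de_xf$ contributions cancel. Second, $[J,\de_{yy}]=-t(u'''\de_x+2u''\de_{xy})$, since $[\de_y,\de_{yy}]=0$ and $[u'\de_x,\de_{yy}]=-u'''\de_x-2u''\de_{xy}$. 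Applying $J$ to \eqref{eq:ADE} on the band $k$ therefore yields
\begin{align}
\de_tg+u\de_xg=\nu\de_{yy}g+\nu R,\qquad R:=-t\big(u'''\de_xf_k+2u''\de_{xy}f_k\big).
\end{align}
Since $J$ preserves band localization and realness, $\|\de_xg\|=k\|g\|$ still holds, and I drop the subscript $k$ as in Section~2.

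\textbf{Step 2 (removing the factor $t$).} The naive bound on $R$ loses a power of $t$, which is fatal; the key is that $Jf=g$ reads $tu'\de_xf=g-\de_yf=:h$, and differentiating in $y$ gives $tu''\de_xf=\tfrac{u''}{u'}h$ and $tu'\de_{xy}f=\de_yh-\tfrac{u''}{u'}h$. Substituting,
\begin{align}
R=a\,h-2\tfrac{u''}{u'}\de_yh,\qquad a:=2\big(\tfrac{u''}{u'}\big)^2-\tfrac{u'''}{u'},\quad |a|\le 3\UU^2,
\end{align}
the bound on $a$ coming from \eqref{eq:shear}. Thus $R$ is $\nu$-free and $t$-free, and is built only from $h=g-\de_yf$ and $\de_yh=\de_yg-\de_{yy}f$: the $g$-contributions (with their $\de_x$-derivatives and $u'$-weights) are controlled by the coercivity and dissipation attached to $\J_k$, while the $f$-contributions $\de_yf$, $\de_{yy}f$ and $u'\de_yf=k^{-1}u'\de_{xy}f$ are precisely the three quantities allowed on the right-hand side of \eqref{eq:ODE11J}.

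\textbf{Step 3 (energy balances).} Testing the $g$-equation and its $\de_y$- and $u'\de_x$-derivatives against $g$, $\de_yg$, $u'\de_xg$ exactly as in Section~2, one obtains the analogue of \eqref{eq:Phidiffeq} with $f$ replaced by $g$, plus the extra source
\begin{align}
S:=\nu\l R,g\r+\nu\alpha\l\de_yR,\de_yg\r+\nu\beta\l u'\de_xR,\de_yg\r+\nu\beta\l u'\de_xg,\de_yR\r+\nu\gamma\l u'\de_xR,u'\de_xg\r
\end{align}
on the right. Carrying the core error estimates \eqref{eq:err1sol}–\eqref{eq:ODE4} through for $g$ (using $\nu k^{-1}\le\nu_0\le1$) gives
\begin{align}
\ddt\J_k+8\eps_0\nu^{1/3}k^{2/3}\J_k+\tfrac{\nu}{4}\|\de_yg\|^2+\tfrac{\nu\alpha}{2}\|\de_{yy}g\|^2+\tfrac{\nu\gamma}{2}\|u'\de_{xy}g\|^2\le S,
\end{align}
so $6\eps_0\nu^{1/3}k^{2/3}\J_k$ together with the three dissipation terms is the budget available for $S$.

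\textbf{Step 4 (estimating $S$ — the heart of the matter).} In each of the five terms I integrate by parts in $x$ and/or $y$ so that $R$ appears with at most one $y$-derivative; in particular $\de_yR$ is always moved onto the companion factor, which is essential because $\de_yR$ contains the uncontrolled $\de_{yyy}f$. I then substitute $R=ah-2\tfrac{u''}{u'}\de_yh$, $h=g-\de_yf$, and apply Cauchy–Schwarz and Young, splitting each product into a pure-$g$ square and a pure-$f$ square. The pure-$g$ squares go into $\J_k$, into $\tfrac\nu4\|\de_yg\|^2+\tfrac{\nu\alpha}2\|\de_{yy}g\|^2+\tfrac{\nu\gamma}2\|u'\de_{xy}g\|^2$, or into the $6\eps_0\nu^{1/3}k^{2/3}\J_k$ margin; it is here that the precise scalings \eqref{eq:alphabeta}–\eqref{eq:alphabeta0}, and for the lower-order terms (notably the $\|u'\de_xg\|^2$-type terms, which only fit into the decay margin) the sharper restriction $\nu k^{-1}\le\nu_0$ with $\nu_0$ as in \eqref{eq:nukrestr}, are exactly what is needed. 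The pure-$f$ squares add up to at most $3504\nu\UU^6[\|\de_yf\|^2+\alpha\|\de_{yy}f\|^2+\gamma\|u'\de_{xy}f\|^2]$, the claimed right-hand side. The delicate term is the last, $\nu\gamma\l u'\de_xR,u'\de_xg\r$: $u'\de_xR$ contains $u''\de_{xy}h$, and bounding it directly would bring in the uncontrolled $u'\de_{xyy}f$, so one integrates by parts once more in $y$, turning it into $\l\de_xh,(u'u'')'\de_xg\r$ and $\l\de_xh,u'u''\de_{xy}g\r$, which involve only $u'\de_{xy}f$, $u'\de_xg$, $u'\de_{xy}g$. Once every term of $S$ is bounded this way, absorbing into the left-hand side of Step~3 and discarding the leftover $g$-dissipation and surplus decay yields \eqref{eq:ODE11J}. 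The real obstacle is organizational: tracking which weighted derivative of $f$ or $g$ each integration by parts produces, and checking via the $k$- and $\nu$-scalings that it lands in the correct slot.
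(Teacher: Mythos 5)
Your proposal follows essentially the same route as the paper: you apply $J$, use $[J,\de_t+u\de_x]=0$, rewrite the commutator with $\de_{yy}$ in terms of $h=Jf_k-\de_yf_k$ to eliminate the explicit factor of $t$ (this is exactly the paper's reformulation $\nu\frac{u'''}{u'}(Jf-\de_yf)-2\nu\de_y\bigl(\frac{u''}{u'}(Jf-\de_yf)\bigr)$), rerun the hypocoercivity scheme on $Jf_k$ with the same coefficients, integrate by parts to avoid $\de_{yyy}f$ and $u'\de_{xyy}f$, and use the restriction $\nu k^{-1}\leq\nu_0$ to absorb the lower-order $\|Jf_k\|^2$-type terms into $\frac{\bJ}{4}\|u'\de_xJf_k\|^2$, leaving only the $f_k$-dissipation terms on the right. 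The argument is correct and matches the paper's proof in all essential respects, differing only in bookkeeping (you extract the decay margin before absorbing the errors rather than after).
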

The proof of the above result is vaguely reminiscent of  that of Proposition  \ref{prop:Phi-ODE}, and it will be carried out in the next sections.

\subsection{The vector field and inviscid mixing estimates}
Before going to the proof, let us clarify the reason of the choice of the vector field in \eqref{eq:vectorfield}. We apply $J$ to
the advection diffusion equation \eqref{eq:ADE}.
Since
\begin{align}
[J,\de_t+u\de_{x}]=0
\end{align}
and
\begin{align}
[J,\de_{yy}]=- tu'''\de_x -2 tu'' \de_{xy}
&=-\frac{u'''}{u'} (J-\de_y)-2 u'' \de_{y}\left(\frac{1}{u'}(J-\de_y)\right)\notag\\
&=\frac{u'''}{u'} (J-\de_y)-2 \de_{y}\left(  \frac{u''}{u'}(J-\de_y)\right),
\end{align}
we have that
\begin{align}\label{eq:vectoreq}
\de_t Jf+u\de_x J f=\nu \de_{yy} Jf +\nu\frac{u'''}{u'} (Jf-\de_yf)-2 \nu\de_{y}\left(  \frac{u''}{u'}(Jf-\de_yf)\right).
\end{align}
It is clear from the above computation why the strict monotonicity requirement on $u$ is important. The crucial property of $J$
is that, combined with the $L^2$ norm of $f_k$, it provides a bound on the $\dot{H}^{-1}$ norm of $f_k$ itself, with an extra factor of $t$.
This is essentially contained in \cite{WZZ18}.

\begin{lemma}\label{lem:boundbelow}
Let $k\in \N$. There there holds
\begin{align}\label{eq:boundbelow}
kt\| f_k(t) \|_{\dot{H}^{-1}}\leq 2\UU^2\left[\|f_k(t)\|+\|Jf_k(t)\|\right],
\end{align}
for every $t>0$.
\end{lemma}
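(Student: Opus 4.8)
The plan is to combine the duality description of the $\dot H^{-1}$ norm with the defining identity for $J$. Since $f_k$ is supported on the $x$-frequency band $\pm k$ with $k\geq 1$, I would start from
\[
\|f_k\|_{\dot H^{-1}}=\sup\big\{|\l f_k,\psi\r| \ :\ \psi \text{ supported on band }k,\ \|\nabla\psi\|\leq 1\big\},
\]
the restriction of $\psi$ to band $k$ costing nothing because the projection onto band $k$ is an $L^2$-orthogonal contraction commuting with $\nabla$. On band $k$ the operator $\de_x$ is boundedly invertible with operator norm $1/k$, so I set $\phi:=-\de_x^{-1}\psi$, which satisfies $\l f_k,\psi\r=\l\de_x f_k,\phi\r$ and, using $\|\psi\|\leq k^{-1}\|\de_x\psi\|\leq k^{-1}\|\nabla\psi\|$ once more, the bounds $\|\phi\|\leq k^{-2}\|\nabla\psi\|$ and $\|\de_y\phi\|\leq k^{-1}\|\nabla\psi\|$.

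The key step is to trade a power of $\de_x$ for a power of $t$ via the identity $t u'\de_x f_k=Jf_k-\de_y f_k$, which is just the definition \eqref{eq:vectorfield}. Dividing by $u'$—legitimate and harmless since \eqref{eq:shear} gives $0<1/u'\leq\UU$—and pairing with $\phi$, then integrating by parts in $y$ to move $\de_y$ off $f_k$, I obtain
\[
t\,\l\de_x f_k,\phi\r=\Big\l Jf_k,\tfrac{\phi}{u'}\Big\r+\Big\l f_k,\de_y\!\big(\tfrac{\phi}{u'}\big)\Big\r .
\]
It is exactly here that strict monotonicity is used: $u'$ depends only on $y$, so multiplication by $1/u'$ commutes with $\de_x$, and the boundedness of $1/u'$ and of $|u''|/(u')^2=(1/u')(|u''|/u')\leq\UU^2$ keeps the commutator terms under control.

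It then remains to estimate the right-hand side by Cauchy--Schwarz, using $\|\phi/u'\|\leq\UU\|\phi\|\leq\UU k^{-2}$ and $\|\de_y(\phi/u')\|\leq\UU\|\de_y\phi\|+\UU^2\|\phi\|\leq\UU k^{-1}+\UU^2 k^{-2}$, which yields $t|\l f_k,\psi\r|\leq \tfrac{\UU}{k}\big[k^{-1}\|Jf_k\|+\|f_k\|+\UU k^{-1}\|f_k\|\big]$ for every admissible $\psi$. Since $k\geq1$ and $\UU\geq1$, the bracket is at most $2\UU(\|f_k\|+\|Jf_k\|)$, so taking the supremum over $\psi$ and multiplying by $k$ gives \eqref{eq:boundbelow}. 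I do not expect a genuine obstacle here; the only points demanding a little care are the reduction to test functions on the single band $k$ (which is what makes $\de_x^{-1}$ a bounded operator and the $\dot H^{-1}$ duality exact) and the verification that dividing by $u'$ does not degrade the constant—both handled directly by hypothesis \eqref{eq:shear}.
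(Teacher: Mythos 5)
Your proof is correct and follows essentially the same route as the paper: the paper tests the identity $tu'\de_x f_k=Jf_k-\de_yf_k$ against the specific extremal function $\de_x\psi_k$ with $\Delta\psi_k=f_k$, divides by $u'$, integrates by parts in $y$, and uses \eqref{eq:shear} to bound $1/u'$ and $u''/(u')^2$, which is exactly your computation phrased via duality with a general band-$k$ test function in place of $\de_x\psi_k$. The constants work out identically, so there is nothing to fix.
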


\begin{proof}[Proof of Lemma \ref{lem:boundbelow}]
For each $k\in \N$, let us define the ``stream-function'' $\psi_k$ as the solution to the elliptic problem
\begin{align}
\Delta\psi_k=f_k,
\end{align}
with appropriate periodic boundary conditions in $x$, decay in $y$, and mean zero condition. Then, a series of 
integration by parts entails
\begin{align}
t\| \nabla \de_x\psi_k \|^2&= -t\l \de_x\psi_k, \de_xf_k\r=-\l \de_x\psi_k, \frac{1}{u'} t u'\de_xf_k\r=
-\l \de_x\psi_k, \frac{1}{u'} (Jf_k-\de_yf_k)\r\notag\\
&=-\l \de_x\psi_k, \frac{1}{u'} Jf_k\r+\l \frac{1}{u'}\de_x\psi_k, \de_yf_k\r
=-\l \de_x\psi_k, \frac{1}{u'} Jf_k\r-\l \de_y(\frac{1}{u'}\de_x\psi_k), f_k)\r\notag\\
&\leq \UU \|\de_x\psi_k\|\|Jf_k\| +\UU\left[\|\de_{xy}\psi_k\| +\UU\|\de_x\psi_k\| \right] \|f_k\|.
\end{align}
Thus
\begin{align}
t\| \nabla \de_x\psi_k \|\leq 2\UU^2\left[\|f_k\|+\|Jf_k\|\right],
\end{align}
which readily implies \eqref{eq:boundbelow} and concludes the proof.
\end{proof}

The power of this technique can be understood by looking at \eqref{eq:vectoreq} for $\nu=0$. In that case, we find
\begin{align}
\ddt \left[\|f_k\|^2+\|J f_k\|^2\right]=0,
\end{align}
and since $Jf_k(0)=\de_yf_k^{in}$, we find from \eqref{eq:boundbelow} that
\begin{align}
(kt)^2\| f_k(t) \|^2_{\dot{H}^{-1}}\leq 4\UU^4\left[\|f^{in}_k\|^2+\|\de_yf_k^{in}\|^2\right].
\end{align}
This implies \eqref{eq:mixgen}, and in fact it does not even require the weighted $L^2$ norm. 
However, the proof of \eqref{eq:enMIXmono} is much more involved.

\subsection{Energy balances}
As in Section \ref{sub:balance}, we omit the dependence on $k$ of the various functions.
We begin by testing \eqref{eq:vectoreq}  with $Jf$. We have
\begin{align}
\frac12\ddt \| Jf\|^2 +\nu\|\de_y Jf\|^2
&=\nu \left[ \l \frac{u'''}{u'} (Jf-\de_yf),Jf\r -2 \l\de_{y}\left(  \frac{u''}{u'}(Jf-\de_yf) \right),Jf\r\right]
\end{align}
Thus, integrating by parts the last term entails
\begin{align}\label{eq:energyJ}
\frac12\ddt \| Jf\|^2 +\nu\|\de_y Jf\|^2=\nu E_0
\end{align}
where
\begin{align}\label{eq:E0}
E_0= \l \frac{u'''}{u'} (Jf-\de_yf),Jf\r +2 \l  \frac{u''}{u'}(Jf-\de_yf) ,\de_yJf\r.
\end{align}
Taking $\de_y$ of  \eqref{eq:vectoreq}, we have
\begin{align}\label{eq:vectoreqDY}
\de_t \de_yJf+u\de_{xy} J f=\nu \de_{yyy}Jf +\nu\de_y\left(\frac{u'''}{u'} (Jf-\de_yf)\right)-2 \nu\de_{yy}\left(  \frac{u''}{u'}(Jf-\de_yf)\right)- u' \de_x  J f.
\end{align}
Thus,  multiplying by $\de_yJf$ in $L^2$, we have
\begin{align}
\frac12\ddt \| \de_y Jf\|^2 +\nu\|\de_{yy} Jf\|^2=&\nu \left[ \l\de_y\left(\frac{u'''}{u'} (Jf-\de_yf)\right),\de_y Jf\r-2 \l\de_{yy}\left(  \frac{u''}{u'}(Jf-\de_yf)\right),\de_y Jf\r  \right]\notag\\
&-\l u' \de_x  J f, \de_yJf\r\notag\\
=&-\nu \left[ \l\frac{u'''}{u'} (Jf-\de_yf),\de_{yy} Jf\r-2 \l\de_{y}\left(  \frac{u''}{u'}(Jf-\de_yf)\right), \de_{yy} Jf\r  \right]\notag\\
&-\l u' \de_x  J f, \de_yJf\r
\end{align}
As a consequence,
\begin{align}\label{eq:alphaJ}
\frac12\ddt \| \de_y Jf\|^2 +\nu\|\de_{yy} Jf\|^2=E_{1},
\end{align}
having defined
\begin{align}\label{eq:E1}
E_{1}=&\nu \left[ \l\frac{u' u'''+2(u'')^2}{(u')^2} (Jf-\de_yf),\de_{yy} Jf\r-2 \l  \frac{u''}{u'}(\de_{y}Jf-\de_{yy}f), \de_{yy} Jf\r  \right]\notag\\
&-\l u' \de_x  J f, \de_yJf\r.
\end{align}
Turning to the $\beta$-term in \eqref{eq:Jfunc}, using \eqref{eq:vectoreq} and several integration by parts we have
\begin{align}
&\ddt\l u' \de_x  J f, \de_yJf\r+\|u'\de_x Jf\|^2\notag\\
&\quad=\nu \bigg[
\l u' \de_{xyy} Jf  , \de_yJf\r +\l u' \de_x  J f, \de_{yyy} Jf\r\notag\\
&\quad\qquad+ \l u' \de_x \left(\frac{u'''}{u'} (Jf-\de_yf)\right), \de_yJf\r+\l u' \de_x  J f, \de_y \left(\frac{u'''}{u'} (Jf-\de_yf)\right) \r\notag\\
&\quad\qquad-2\l u'  \de_{xy}\left(  \frac{u''}{u'}(Jf-\de_yf)\right), \de_yJf\r-2\l u' \de_x  J f, \de_{yy}\left(  \frac{u''}{u'}(Jf-\de_yf)\right)\r \bigg]\notag\\
&\quad=\nu \bigg[
-2\l u' \de_{xy}Jf,   \de_{yy} Jf \r-\l u'' \de_x  J f, \de_{yy} Jf\r - 2\l   u''' \de_xJf, \de_yJf\r\notag\\
&\quad\qquad -4\l    u''\de_{yy}f, \de_{xy}Jf\r -4\l  \frac{u'u'''-(u'')^2}{u'}\de_x Jf, \de_{y}Jf\r-\l  \frac{6u'u'''-4(u'')^2}{u'}\de_yf, \de_{xy}Jf\r\notag\\
&\quad\qquad+2\l u'' \de_x  J f,   \frac{u''}{u'}(\de_yJf-\de_{yy}f)\r-\l u'' \de_x  J f,  \frac{u'u'''-2(u'')^2}{(u')^2}\de_yf\r \bigg],
%
%
%
\end{align}
implying
\begin{align}\label{eq:betaJ}
\ddt\l u' \de_x  J f, \de_yJf\r +\|u'\de_x Jf\|^2 &=\nu E_2,
\end{align}
for $E_2$ defined as
\begin{align}\label{eq:E2}
E_2&= \bigg[
-2\l u' \de_{xy}Jf,   \de_{yy} Jf \r-\l u'' \de_x  J f, \de_{yy} Jf\r - 2\l   u''' \de_xJf, \de_yJf\r\notag\\
&\quad\qquad -4\l    u''\de_{yy}f, \de_{xy}Jf\r -4\l  \frac{u'u'''-(u'')^2}{u'}\de_x Jf, \de_{y}Jf\r-\l  \frac{6u'u'''-4(u'')^2}{u'}\de_yf, \de_{xy}Jf\r\notag\\
&\quad\qquad+2\l u'' \de_x  J f,   \frac{u''}{u'}(\de_yJf-\de_{yy}f)\r-\l u'' \de_x  J f,  \frac{u'u'''-2(u'')^2}{(u')^2}\de_yf\r \bigg].
\end{align}
Lastly, for the $\cJ$-term we argue as in \eqref{eq:gamma} and obtain
\begin{align}
&\frac12\ddt\| u'\de_x J f\|^2+\nu\| u' \de_{xy} Jf\|^2\notag\\
&\qquad\qquad=\nu\bigg[\l (u'u'')'\de_x Jf,\de_xJf\r+ \l u''' \de_x(Jf-\de_yf),u'\de_xJf\r \notag\\
&\qquad\qquad\qquad\quad-2\l u'\de_{xy}\left(  \frac{u''}{u'}(Jf-\de_yf)\right),u'\de_xJf\r\bigg]\notag\\
&\qquad\qquad=\nu\bigg[\l (4 (u'')^2+u'u''')\de_{x}Jf,\de_xJf\r -\l (4 (u'')^2+u'u''')\de_{xy}f,\de_xJf\r \notag\\
&\qquad\qquad\qquad\quad-2\l u''\de_{xy}f,u'\de_{xy}Jf\r\bigg].
\end{align}
Therefore,
\begin{align}\label{eq:gammaJ}
\frac12\ddt\| u'\de_x J f\|^2+\nu\| u' \de_{xy} Jf\|^2=\nu E_3,
\end{align}
where
\begin{equation}\label{eq:E3}
E_3=\l (4 (u'')^2+u'u''')\de_{x}Jf,\de_xJf\r -\l (4 (u'')^2+u'u''')\de_{xy}f,\de_xJf\r -2\l u''\de_{xy}f,u'\de_{xy}Jf\r.
\end{equation}
Collecting \eqref{eq:energyJ}, \eqref{eq:alphaJ}, \eqref{eq:betaJ} and \eqref{eq:gammaJ} we end up with
\begin{align}\label{eq:ODE1J}
&\ddt \J+\nu\|\de_y  Jf\|^2+\nu\aJ\|\de_{yy}  Jf\|^2
 +\bJ \| u'\de_x  Jf\|^2+\cJ\nu\| u' \de_{xy} Jf\|^2\notag\\
 &\qquad=\nu E_0+\aJ E_1+\nu \bJ E_2+\nu \cJ E_3.
  \end{align}
To reach \eqref{eq:ODE11J}, we now have to estimate all the error terms in a suitable way.

\subsection{Error estimates}
In what follows, we will make heavy use of the assumption \eqref{eq:shear}, keeping track of the dependence on $\UU$ of all the constants.
The error term $E_0$ in \eqref{eq:E0} is easily estimated as
\begin{align}\label{eq:ERRL2J}
|E_0|\leq \frac{1}{4}\|\de_yJf\|^2+10\UU^2\|Jf\|^2+9\UU^2\|\de_yf\|^2.
\end{align}
Turning to $E_1$ in \eqref{eq:E1}, we notice that the last term is essentially estimated as in \eqref{eq:err1sol}, exploiting the fact
that
\begin{align}\label{eq:constraint2J}
\frac{\aJ^2}{\bJ\nu}= \frac{1}{4},
\end{align}
and obtaining
\begin{align}
\aJ \left|\l u' \de_x  J f, \de_yJf\r\right|\leq  \frac{\nu}{4} \|\de_y Jf\|^2+ \frac{\bJ}{4} \| u'\de_x J f \|^2.
\end{align}
As for the other terms,
\begin{align}
& \left| \l\frac{u' u'''+2(u'')^2}{(u')^2} (Jf-\de_yf),\de_{yy} Jf\r-2 \l  \frac{u''}{u'}(\de_{y}Jf-\de_{yy}f), \de_{yy} Jf\r  \right|\notag\\
&\qquad\leq \UU\left[3\UU\|Jf\|\|\de_{yy} Jf\|+3\UU\|\de_yf\|\|\de_{yy} Jf\| +2 \|\de_yJf\|\|\de_{yy} Jf\|+2\|\de_{yy}f\| \|\de_{yy} Jf\| \right]\notag\\
&\qquad\leq\frac{1}{4}  \|\de_{yy} Jf\|^2 + 36  \UU^4\|Jf\|^2+ 36  \UU^4\|\de_yf\|^2+16 \UU^2 \|\de_yJf\|^2+16\UU^2\|\de_{yy}f\|^2.
\end{align}
Thus,
\begin{align}\label{eq:ERRH1J}
\alpha |E_{1}|
&\leq  \frac{\nu}{4} \|\de_y Jf\|^2+ \frac{\bJ}{4} \| u'\de_x J f \|^2
+\frac{\aJ\nu}{4}  \|\de_{yy} Jf\|^2\notag \\ 
&\quad + 36 \aJ\nu \UU^4\|Jf\|^2+ 36 \aJ\nu \UU^4\|\de_yf\|^2+16 \aJ \nu\UU^2 \|\de_yJf\|^2+16 \aJ\nu\UU^2\|\de_{yy}f\|^2.
\end{align}
We now deal with $E_2$ in \eqref{eq:E2}. Like in \eqref{eq:err2sol} and \eqref{eq:err3sol}, recalling that
\begin{align}\label{eq:constraint1bisJ}
\frac{\bJ^2 }{\aJ\cJ}= \frac{1}{8},
\end{align}
we have
\begin{align}\label{eq:err2solJ}
2\nu\bJ|\l u'\de_{xy}  Jf, \de_{yy} Jf\r|\leq \frac{\nu\aJ}{4}\|\de_{yy} Jf\|^2+ \frac{\nu \cJ}{2}\|u'\de_{xy}  Jf\|^2,
\end{align}
and
\begin{align}\label{eq:err3solJ}
\nu\bJ|\l u''\de_x  Jf, \de_{yy} Jf\r|\leq \frac{\nu\aJ}{4}\|\de_{yy} Jf\|^2+\frac{\nu\cJ}{8}\| u''\de_x  Jf\|^2.
\end{align}
The other terms are estimated as
\begin{align}
&\nu \beta  \bigg|
 - 2\l   u''' \de_xJf, \de_yJf\r-4\l    u''\de_{yy}f, \de_{xy}Jf\r -4\l  \frac{u'u'''-(u'')^2}{u'}\de_x Jf, \de_{y}Jf\r\notag\\
&\quad -\l  \frac{6u'u'''-4(u'')^2}{u'}\de_yf, \de_{xy}Jf\r+2\l u'' \de_x  J f,   \frac{u''}{u'}(\de_yJf-\de_{yy}f)\r\notag\\
&\quad-\l u'' \de_x  J f,  \frac{u'u'''-2(u'')^2}{(u')^2}\de_yf\r   \bigg|\notag\\
&\qquad\leq \beta\nu\Big[ 12\UU^2 \|u'\de_xJf\|\|\de_yJf\| +4\UU\| u'\de_{xy}Jf\|\|\de_{yy}f\|
+10 \UU^2\|u'\de_{xy}Jf\|\|\de_yf\|\notag\\
&\qquad\qquad\quad+2\UU^2\|u'\de_xJf\|\|\de_{yy}f\| +3\UU^3 \|u'\de_xJf\|\|\de_yf\|\Big]\notag\\
&\qquad\leq \frac{\bJ}{4}\| u'\de_xJf\|^2+\frac{\cJ\nu}{8} \|u'\de_{xy}Jf\|^2+1152\bJ\nu^2 \UU^4\|\de_yJf\|^2+\frac{400\bJ^2 \nu \UU^4}{\cJ}\|\de_yf\|^2\notag\\
 &\qquad\quad+\frac{64 \bJ^2 \nu \UU^2}{\cJ} \|\de_{yy}f\|^2 + 27 \bJ\nu^2\UU^{6}\|\de_yf\|^2
+12\bJ\nu^2\UU^4\|\de_{yy}f\|^2.
\end{align}
Taking into account \eqref{eq:constraint1bisJ} and collecting \eqref{eq:err2solJ}, \eqref{eq:err3solJ} and the estimates above, we have
\begin{align}\label{eq:ERRcrossJ}
\nu \beta|E_{2}|
&\leq \frac{\nu\aJ}{2}\|\de_{yy} Jf\|^2+ \frac{5\nu \cJ}{8}\|u'\de_{xy}  Jf\|^2 +\frac{\bJ}{4}\| u'\de_xJf\|^2\notag\\
&\quad +\UU^2\frac{\nu\cJ}{8}\| u'\de_x  Jf\|^2 +1152\bJ\nu^2 \UU^4\|\de_yJf\|^2 +50\aJ \nu \UU^4\|\de_yf\|^2+8 \aJ \nu \UU^2 \|\de_{yy}f\|^2  \notag\\
&\quad+ 27 \bJ\nu^2\UU^{6}\|\de_yf\|^2
+12\bJ\nu^2\UU^4\|\de_{yy}f\|^2.
\end{align}
Lastly, looking at \eqref{eq:E3}, we have
\begin{align}\label{eq:ERRgammaJ}
|E_3|&=\l (4 (u'')^2+u'u''')\de_{x}Jf,\de_xJf\r -\l (4 (u'')^2+u'u''')\de_{xy}f,\de_xJf\r -2\l u''\de_{xy}f,u'\de_{xy}Jf\r\notag\\
&\leq 5\UU^2\|u'\de_{x}Jf\|^2+5\UU^2\|u'\de_{x}Jf\|\| u'\de_{xy}f\|+ \UU\|u'\de_{xy}Jf\|\| u'\de_{xy}f\|\notag\\
&\leq \frac18\|u'\de_{xy}Jf\|^2+7\UU^2\|u'\de_{x}Jf\|^2+8\UU^2\| u'\de_{xy}f\|^2.
\end{align}
Now that all the errors have been properly controlled, we are ready to prove Proposition \ref{prop:J-ODE}.

\subsection{Proof of Proposition \ref{prop:J-ODE}}
Collecting \eqref{eq:ERRL2J}, \eqref{eq:ERRH1J}, \eqref{eq:ERRcrossJ} and  \eqref{eq:ERRgammaJ} into \eqref{eq:ODE1J} we have
\begin{align}\label{eq:ODE2J}
&\ddt \J+\frac{\nu}{2}\|\de_y  Jf\|^2+\frac{\nu\aJ}{4}\|\de_{yy}  Jf\|^2+\frac{\bJ}{2} \| u'\de_x  Jf\|^2+\frac{\cJ\nu}{4}\| u' \de_{xy} Jf\|^2\notag\\
 &\qquad\leq 1168\UU^6\nu\big[(1+\aJ) \| Jf\|^2+ (\aJ+\bJ\nu )\| \de_yJf\|^2 + \cJ\|u' \de_{x}Jf\|^2  \notag\\
 &\qquad\quad+(1+\aJ+\bJ\nu) \|\de_yf\|^2 + (\aJ+\bJ\nu) \|\de_{yy}f\|^2+\cJ \|u'\de_{xy}f\|^2\big].
  \end{align}
Recall that $\nu k^{-1}\leq 1$ and that \eqref{eq:alphabeta}-\eqref{eq:alphabeta0} in particular implies that
\begin{align}\label{eq:constraint4J}
\bJ_0\leq\aJ_0\leq 1.
\end{align}
Computing all the coefficients of the above errors, we have
\begin{align}
1+\aJ\leq 2, \qquad \aJ+\bJ\nu\leq2\aJ\leq 2\aJ_0,\qquad 1+\aJ+\bJ\nu\leq3,
\end{align}
so that
\begin{align}\label{eq:ODE3J}
&\ddt \J+\frac{\nu}{2}\|\de_y  Jf\|^2+\frac{\nu\aJ}{4}\|\de_{yy}  Jf\|^2+\frac{\bJ}{2} \| u'\de_x  Jf\|^2+\frac{\cJ\nu}{4}\| u' \de_{xy} Jf\|^2\notag\\
 &\qquad\leq 3504\UU^6\nu\big[\| Jf\|^2+ \aJ_0\| \de_yJf\|^2 + \cJ\|u' \de_{x}Jf\|^2 +\|\de_yf\|^2 + \aJ \|\de_{yy}f\|^2+\cJ \|u'\de_{xy}f\|^2\big].
  \end{align}
In light of the restriction $\nu k^{-1}\leq \nu_0$ given by \eqref{eq:nukrestr}, we have
\begin{align}
3504\nu  \UU^{6}\left[\| Jf\|^2+ \cJ\|u' \de_{x}Jf\|^2\right]\leq 
\frac{7008 \UU^{8}}{\bJ_0}\frac{\nu^{2/3}}{k^{2/3}} \bJ\|  u'\de_x Jf\|^2\leq \frac{\bJ}{4}\|  u'\de_x Jf\|^2,
\end{align}
Using once more the expression of $\aJ_0$ in \eqref{eq:alphabeta0}, \eqref{eq:ODE2J} becomes
\begin{align}\label{eq:ODE5J}
&\ddt \J+\frac{\nu}{4}\|\de_y  Jf\|^2+\frac{\nu\aJ}{4}\|\de_{yy}  Jf\|^2+\frac{\bJ}{4} \| u'\de_x  Jf\|^2+\frac{\cJ\nu}{4}\| u' \de_{xy} Jf\|^2\notag\\
 &\qquad\leq 3504\UU^6\nu\big[\|\de_yf\|^2 + \aJ \|\de_{yy}f\|^2+\cJ \|u'\de_{xy}f\|^2\big].
  \end{align}
We now proceed as we did after \eqref{eq:ODE2}, arriving after a few easy computations at
\begin{align}\label{eq:ODE6J}
&\ddt \J +\frac{\bJ_0 }{16\UU^2}\nu^{1/3}k^{2/3} \J+\frac{\nu\aJ}{4}\|\de_{yy}  Jf\|^2+\frac{\cJ\nu}{4}\| u' \de_{xy} Jf\|^2\notag\\
 &\qquad\leq 3504\UU^6\nu\big[\|\de_yf\|^2 + \aJ \|\de_{yy}f\|^2+\cJ \|u'\de_{xy}f\|^2\big].
\end{align}
This is equivalent to \eqref{eq:ODE11J}, and therefore the proof is over.

\section{Weighted estimates in the energy norm}
The Fourier-localized version of Theorem \ref{thm:main} is the following result.
\begin{theorem}\label{thm:mainFour}
There exist $C_0\geq 1$ and $\eps_0,\nu_0\in(0,1)$ only depending on $\UU$ such that if $\nu k^{-1}\in[0,\nu_0]$ we have the enhanced diffusion estimate
\begin{align}\label{eq:engenFour2}
\|f_{k}(t)\|_{u'}\leq C_0 \e^{-\eps_0 \nu^{1/3}k^{2/3} t}\|f^{in}_{k}\|_{u'},\qquad \forall t\geq 0,
\end{align}
and the stable mixing estimate
\begin{align}\label{eq:enMIXmonoFour}
\|f_{k}(t)\|_{\dot{H}^{-1}}\leq C_0\frac{ \e^{-\eps_0 \nu^{1/3}k^{2/3} t}}{\sqrt{1+(kt)^2}}\left[\|f^{in}_{k}\|_{u'}+\|\de_y f^{in}_{k}\|_{u'}\right],\qquad  \forall t\geq 0.
\end{align}
In particular, if $\nu=0$ we obtain the inviscid mixing estimate
\begin{align}\label{eq:mixgenFour}
\|f_{k}(t)\|_{\dot{H}^{-1}}\leq \frac{C_0}{\sqrt{1+(kt)^2}}\left[\|f^{in}_{k}\|_{u'}+\|\de_y f^{in}_{k}\|_{u'}\right],\qquad  \forall t\geq 0.
\end{align}
All the constants can be computed explicitly.
\end{theorem}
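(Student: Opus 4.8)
My plan works at the level of the $x$-Fourier bands $f_k$ and has three steps: first, I would deduce the enhanced diffusion bound \eqref{eq:engenFour2} from Proposition~\ref{prop:Phi-ODE}; then I would combine Propositions~\ref{prop:Phi-ODE} and \ref{prop:J-ODE} into a single decaying functional and extract an exponential-in-time estimate for $\|Jf_k\|$, with $J$ the vector field \eqref{eq:vectorfield}; finally I would insert these two inputs into Lemma~\ref{lem:boundbelow}, interpolate with the trivial bound $\|f_k\|_{\dot{H}^{-1}}\le k^{-1}\|f_k\|$, and arrive at \eqref{eq:enMIXmonoFour}, of which \eqref{eq:mixgenFour} is the $\nu=0$ specialization. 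Throughout I write $D_k:=\|f^{in}_k\|_{u'}+\|\de_y f^{in}_k\|_{u'}$; when $\nu>0$ I set the enhanced-diffusion time scale $t_0:=\nu^{-1/3}k^{-2/3}$, so that the standing hypothesis $\nu k^{-1}\le\nu_0\le1$ gives $\nu t_0\le1$, $\nu^{1/3}k^{2/3}t_0=1$, and, crucially, $\alpha/(\nu t_0)=\alpha_0$. All constants below depend only on $\UU$, and the case $\nu=0$ is the easier limit, treated parenthetically where the argument differs.

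For step one, Grönwall applied to Proposition~\ref{prop:Phi-ODE} gives $\Phi_k(t)\le e^{-2\eps_0\nu^{1/3}k^{2/3}(t-s)}\Phi_k(s)$ for $t\ge s\ge0$, and $\Phi_k$ is non-increasing. The one subtlety is that $\Phi_k(0)$ contains $\alpha\|\de_y f^{in}_k\|^2$, which is not controlled by $\|f^{in}_k\|_{u'}$, so one cannot simply use the coercivity bound \eqref{eq:equiv} at $t=0$. Instead, on $[0,t_0]$ the balances \eqref{eq:energy} and \eqref{eq:gamma} (with \eqref{eq:shear}) show that $\|f_k\|$ is non-increasing, that $\|u'\de_x f_k\|\lesssim\|u'\de_x f^{in}_k\|$ uniformly, and that $\int_0^{t_0}\nu\|\de_y f_k\|^2\le\tfrac12\|f^{in}_k\|^2$; the last fact forces a time $t_*\in[0,t_0]$ with $\nu\|\de_y f_k(t_*)\|^2\le\tfrac1{2t_0}\|f^{in}_k\|^2$, hence $\alpha\|\de_y f_k(t_*)\|^2\le\tfrac{\alpha}{2\nu t_0}\|f^{in}_k\|^2=\tfrac{\alpha_0}2\|f^{in}_k\|^2$. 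By \eqref{eq:equiv} together with $\gamma\|u'\de_x g\|^2=\gamma_0\|u'g\|^2$ this gives $\Phi_k(t_*)\lesssim\|f^{in}_k\|_{u'}^2$, and combining the Grönwall decay on $[t_*,\infty)$ (using $\nu^{1/3}k^{2/3}t_*\le1$) with the uniform bound on $[0,t_*]$ yields \eqref{eq:engenFour2}. (For $\nu=0$, $\alpha=0$, $\Phi_k(0)\lesssim\|f^{in}_k\|_{u'}^2$ directly, and $\Phi_k$ is non-increasing, which is enough.)

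For step two, I would fix $M=M(\UU)$ large enough that $M$ times the dissipation terms of \eqref{eq:ODE5} dominate the right-hand side of \eqref{eq:ODE11J} (e.g.\ $M=14016\,\UU^6$), and set $\Psi_k:=\J_k+M\Phi_k$. Adding $M$ times the inequality \eqref{eq:ODE5} of Proposition~\ref{prop:Phi-ODE} to the inequality \eqref{eq:ODE11J} of Proposition~\ref{prop:J-ODE} absorbs the error on the right and produces $\ddt\Psi_k+2\eps_0\nu^{1/3}k^{2/3}\Psi_k\le0$, valid for $t>0$ (for $\nu>0$, parabolic smoothing renders $\J_k(t)$ finite for $t>0$). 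A new obstacle now appears: $\J_k(0)$ involves $\|\de_{yy}f^{in}_k\|$ and may be infinite for data with only $D_k<\infty$, so I must again supply a good initial time inside $[0,t_0]$. Grönwall on the $Jf_k$-balance \eqref{eq:energyJ}--\eqref{eq:ERRL2J}, using $\|Jf_k(0)\|=\|\de_y f^{in}_k\|$, $\int_0^{t_0}\nu\|\de_y f_k\|^2\le\tfrac12\|f^{in}_k\|^2$ and $\nu t_0\le1$, gives $\sup_{[0,t_0]}\|Jf_k\|\lesssim D_k$; Grönwall on the $\cJ$-balance \eqref{eq:gammaJ}--\eqref{eq:ERRgammaJ}, using $\|u'\de_x Jf_k(0)\|=k\|u'\de_y f^{in}_k\|$ and $\int_0^{t_0}\nu\gamma\|u'\de_{xy}f_k\|^2\le2\Phi_k(0)\lesssim D_k^2$ (from the dissipation term of \eqref{eq:ODE5}; here the scalings match because $\gamma\|u'\de_x g\|^2=\gamma_0\|u'g\|^2$), gives $\sup_{[0,t_0]}\gamma\|u'\de_x Jf_k\|^2\lesssim D_k^2$; and integrating the $Jf_k$-balance over $[0,t_0]$ bounds $\int_0^{t_0}\nu\|\de_y Jf_k\|^2\lesssim D_k^2$, so some $t_*\in[0,t_0]$ has $\nu\|\de_y Jf_k(t_*)\|^2\lesssim D_k^2/t_0$, i.e.\ $\alpha\|\de_y Jf_k(t_*)\|^2\lesssim\alpha_0 D_k^2$. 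At this $t_*$, coercivity \eqref{eq:equivJ} gives $\J_k(t_*)\lesssim D_k^2$ and $\Phi_k(t_*)\le\Phi_k(0)\lesssim D_k^2$, so $\Psi_k(t_*)\lesssim D_k^2$; the decay of $\Psi_k$ on $[t_*,\infty)$ (again $\nu^{1/3}k^{2/3}t_*\le1$) together with the uniform bound on $[0,t_*]$ then yields $\|Jf_k(t)\|\lesssim D_k\,e^{-\eps_0\nu^{1/3}k^{2/3}t}$ for all $t\ge0$. (For $\nu=0$ one uses instead the exact identity $\|f_k(t)\|^2+\|Jf_k(t)\|^2=\|f^{in}_k\|^2+\|\de_y f^{in}_k\|^2$ recorded after Lemma~\ref{lem:boundbelow}, which gives $\|Jf_k(t)\|\le D_k$.)

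For the last step, Lemma~\ref{lem:boundbelow} together with the bounds of steps one and two gives, for $t>0$, $\|f_k(t)\|_{\dot{H}^{-1}}\le\tfrac{2\UU^2}{kt}\big(\|f_k(t)\|+\|Jf_k(t)\|\big)\lesssim\tfrac{D_k}{kt}\,e^{-\eps_0\nu^{1/3}k^{2/3}t}$, while $\|f_k(t)\|_{\dot{H}^{-1}}\le k^{-1}\|f_k(t)\|\lesssim\tfrac{D_k}{k}\,e^{-\eps_0\nu^{1/3}k^{2/3}t}$ by \eqref{eq:engenFour2}. Taking the smaller of the two, using $k\ge1$ and the elementary inequality $\min\{1,(kt)^{-1}\}\le\sqrt2\,(1+(kt)^2)^{-1/2}$, gives \eqref{eq:enMIXmonoFour}; the value $t=0$ is covered by $\|f^{in}_k\|_{\dot{H}^{-1}}\le k^{-1}\|f^{in}_k\|\le\|f^{in}_k\|_{u'}$, and $\nu=0$ yields \eqref{eq:mixgenFour}. (Theorem~\ref{thm:main} then follows by Plancherel in $x$, using $k^{2/3}\ge1$ and $(kt)^2\ge t^2$ for $k\ge1$.) I expect the main obstacle to be the two initial-layer constructions, and especially the one in step two: the control of $\|\de_y f_k\|$, $\|\de_y Jf_k\|$ and $\|u'\de_x Jf_k\|$ that the hypocoercive functionals require cannot be read off the data and must be extracted at a cleverly chosen time from the dissipation integrals, all while tracking that the surviving powers of $\nu$ and $k$ (through $\alpha/(\nu t_0)=\alpha_0$ and the cancellation $\gamma k^2=\gamma_0$) stay $O(1)$ uniformly as $\nu\to0$.
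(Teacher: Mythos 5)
Your proposal is correct and follows essentially the same route as the paper: the functional $\J_k+M\Phi_k$ with $M=1/\delta_0=4\times 3504\,\UU^6$ is just a rescaling of the paper's $\Phi_k+\delta_0\J_k$, the initial-layer construction (a good time $t_\star\in[0,\nu^{-1/3}k^{-2/3}]$ extracted from the dissipation integrals by the mean value theorem, combined with Gr\"onwall on the $L^2$ and weighted balances and the cancellations $\alpha/(\nu T_{\nu,k})=\alpha_0$, $\gamma k^2=\gamma_0$) is exactly the paper's argument, and the conclusion via Lemma \ref{lem:boundbelow} plus the trivial bound $\|f_k\|_{\dot H^{-1}}\le k^{-1}\|f_k\|$ is the step the paper leaves implicit. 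The only cosmetic differences are that you treat the initial layers for $\Phi_k$ and $\J_k$ separately (using monotonicity of $\Phi_k$ at the second $t_\star$) where the paper applies the mean value theorem to the combined quantity, and that you control $\gamma\|u'\de_x Jf_k\|$ through the dissipation integral of \eqref{eq:ODE5} rather than the combined Gr\"onwall estimate \eqref{eq:ueff}; both variants are sound.
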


\begin{proof}[Proof of Theorem \ref{thm:mainFour}]
We will restrict ourselves to the proof of \eqref{eq:enMIXmonoFour}. The proof of \eqref{eq:engenFour2} is analogous, and we will
only highlight the differences whenever they arise. Again, dependences on $k$ of various functions are forgotten during the proof.
Set 
\begin{align}\label{eq:delta0}
\delta_0:=\frac{1}{4\times 3504 \UU^{6}  }.
\end{align}
From \eqref{eq:ODE5} and the above \eqref{eq:ODE11J} we deduce in particular that
\begin{align}\label{eq:ODEfinal}
\ddt \left[\Phi +\delta_0\J \right]+2\eps_0\nu^{1/3}k^{2/3}\left[\Phi +\delta_0\J \right]\leq 0.
\end{align}
By combining \eqref{eq:energy}, \eqref{eq:energyJ}, \eqref{eq:ERRL2J} and \eqref{eq:delta0}, we find that
\begin{align}
\ddt\left[\| f\|^2+\delta_0\|J f\|^2\right]+\nu\left[2\| \de_y f\|^2+\delta_0\| \nabla Jf\|^2\right]\leq \frac{\nu}{6\UU^2}\|Jf\|^2.
\end{align}
Since
\begin{align}
\|Jf\|^2\leq 2\|\de_yf\|^2 +2k^2t^2 \UU^2 \|f\|^2,
\end{align}
we find that
\begin{align}\label{eq:En4}
\ddt\left[\| f\|^2+\delta_0\|J f\|^2\right]+\nu\left[\| \de_y f\|^2+\delta_0\| \de_y Jf\|^2\right]\leq \frac13\nu k^2t^2  \|f\|^2.
\end{align}
In a similar manner, \eqref{eq:gamma}, \eqref{eq:gammaJ}, \eqref{eq:ERRgammaJ} and \eqref{eq:delta0} imply that
\begin{align}
\ddt\left[\| u'\de_x  f\|^2+\delta_0\|u'\de_x Jf\|^2\right]\leq7\UU^2\nu\left[\| u'\de_x  f\|^2+\delta_0\|u'\de_x Jf\|^2\right].
\end{align}
In particular, dividing everything by $k^2$, an application of the Gronwall lemma gives
\begin{align}\label{eq:ueff}
\| u'  f(t)\|^2+\delta_0\|u' Jf(t)\|^2\leq\e^{7\UU^2\nu t}\left[\| u'  f^{in}\|^2+\delta_0\|u' \de_yf^{in}\|^2\right], \qquad \forall t\geq 0.
\end{align}
Define 
\begin{align}
T_{\nu,k}=\frac{1}{\nu^{1/3}k^{2/3}}.
\end{align}
We divide the proof in two cases.
\subsection*{Case $t\geq T_{\nu,k}$}
We integrate \eqref{eq:En4} on $(0,T_{\nu,k})$ and use that $Jf(0)=\de_yf^{in}$ and  that
$\|f(t)\|\leq \| f^{in}\|$ to arrive at
\begin{align}
\nu\int_0^{T_{\nu,k}}\left[\| \de_y f(t)\|^2+\delta_0\| \de_y Jf(t)\|^2\right]\dd t 
&\leq \frac13\nu k^2\int_0^{T_{\nu,k}}t^2  \|f(t)\|^2\dd t+\| f^{in}\|^2+\delta_0\|\de_yf^{in}\|^2\notag\\
&\leq \nu k^2T_{\nu,k}^3\| f^{in}\|^2+\| f^{in}\|^2+\delta_0\|\de_yf^{in}\|^2\notag\\
&\leq 2\left[\| f^{in}\|^2+\delta_0\|\de_yf^{in}\|^2\right].
\end{align}
Thus, by the mean value theorem there exists $t_\star\in (0,T_{\nu,k})$ such that
\begin{align}
\nu{T_{\nu,k}}\left[\| \de_y f(t_\star)\|^2+\delta_0\| \de_y Jf(t_\star)\|^2\right]\leq 2\left[\| f^{in}\|^2+\delta_0\|\de_yf^{in}\|^2\right].
\end{align}
Rewriting and keeping in mind \eqref{eq:alphabeta} and that  $\alpha_0\leq 1$, it follows that
\begin{align}\label{eq:awes1}
\alpha\left[\| \nabla f(t_\star)\|^2+\delta_0\| \nabla Jf(t_\star)\|^2\right]\leq 2\left[\| f^{in}\|^2+\delta_0\|\de_yf^{in}\|^2\right].
\end{align}
On the other hand, by integrating \eqref{eq:En4} on $(0,t_\star)$ we have
\begin{align}\label{eq:awes2}
\| f(t_\star)\|^2+\delta_0\|J f(t_\star)\|^2
&\leq \frac13 \nu k^2\int_0^{t_\star}t^2  \|f(t)\|^2\dd t+\| f^{in}\|^2+\delta_0\|\de_y f^{in}\|^2\notag\\
&\leq \nu k^2T_{\nu,k}^3\| f^{in}\|^2+\| f^{in}\|^2+\delta_0\|\de_yf^{in}\|^2\notag\\
&\leq 2\left[\| f^{in}\|^2+\delta_0\|\de_yf^{in}\|^2\right].
\end{align}
Moreover, by \eqref{eq:ueff} and the restriction \eqref{eq:nukrestr}, we also deduce that 
\begin{align}\label{eq:ueff2}
\| u'  f(t_\star)\|^2+\delta_0\|u' Jf(t_\star)\|^2\leq\e^{7\UU^2\nu T_{\nu,k}}\left[\| u'  f^{in}\|^2+\delta_0\|u' \de_yf^{in}\|^2\right]
\leq2\left[\| u'  f^{in}\|^2+\delta_0\|u' \de_yf^{in}\|^2\right].
\end{align}
Now, using that from \eqref{eq:ODEfinal} we know that $\Phi+\delta_0\J$ is decreasing, we have from \eqref{eq:equiv}, \eqref{eq:equivJ}  and
\eqref{eq:awes1} and \eqref{eq:awes2} and \eqref{eq:ueff2} (and the fact that $\gamma_0\leq1$) that
\begin{align}\label{eq:awes3}
\Phi(T_{\nu,k})+\delta_0\J(T_{\nu,k})
&\leq \Phi(t_\star)+\delta_0\J(t_\star)\notag\\
&\leq \frac14\big[2(\| f(t_\star)\|^2+\delta_0\| Jf(t_\star)\|^2)+3\alpha(\|\de_{y} f(t_\star)\|^2+\delta_0\|\de_{y} Jf(t_\star)\|^2)\notag\\
&\qquad+3\gamma (\| u'\de_x  f(t_\star)\|^2+ \delta_0\| u'\de_x J f(t_\star)\|^2)\big]\notag\\
&\leq \frac14\big[2(\| f(t_\star)\|^2+\delta_0\| Jf(t_\star)\|^2)+3\alpha(\|\de_{y} f(t_\star)\|^2+\delta_0\|\de_{y} Jf(t_\star)\|^2)\notag\\
&\qquad+3\gamma_0 (\| u'  f(t_\star)\|^2+ \delta_0\| u' J f(t_\star)\|^2)\big]\notag\\
&\leq 2\left[\| f^{in}\|^2+\delta_0\| \de_yf^{in}\|^2+\|u' f^{in}\|^2+\delta_0\|u' \de_yf^{in}\|^2\right]\notag\\
&\leq 2\left[\| f^{in}\|_{u'}^2+\delta_0\| \de_yf^{in}\|_{u'}^2\right],
\end{align}
where the norm $\|\cdot\|_{u'}$ is defined in \eqref{eq:Xnorm}.
Thus, for $t\geq T_{\nu,k}$ we have from \eqref{eq:equiv}, \eqref{eq:equivJ}, \eqref{eq:ODEfinal} and \eqref{eq:awes3} that
\begin{align}\label{eq:expfinal2}
\frac{\delta_0\gamma_0}{4}\left[\| f(t)\|_{u'}^2+\| Jf(t)\|_{u'}^2\right] 
&\leq \Phi(t) +\delta_0\J(t)\notag \\
&\leq\e^{-2\eps_0\nu^{1/3}k^{2/3} (t-T_{\nu,k})}\left[\Phi(T_{\nu,k}) +\delta_0\J(T_{\nu,k}) \right]\notag\\
&\leq 2 \e^{2\eps_0}\e^{-2\eps_0\nu^{1/3}k^{2/3}t}\left[\| f^{in}\|_{u'}^2+\delta_0\| \de_yf^{in}\|_{u'}^2\right].
\end{align}
Thus since $\delta_0\leq 1$ and $\eps_0\ll1$,
\begin{align}\label{eq:expfinal3}
\|f(t)\|_{u'}^2+\| Jf(t)\|_{u'}^2\leq \frac{20}{\delta_0\gamma_0}\e^{-2\eps_0\nu^{1/3}k^{2/3}t}\left[\| f^{in}\|_{u'}^2+\| \de_yf^{in}\|_{u'}^2\right],\qquad \forall t\geq \frac{1}{\nu^{1/3}k^{2/3}},
\end{align}
as we wanted.

\subsection*{Case $t\in[0, T_{\nu,k})$}
Notice that we are done if we can prove that
\begin{align}
\| f(t)\|_{u'}^2+\| Jf(t)\|_{u'}^2\leq C\left[\| f^{in}\|_{u'}^2+\|\de_yf^{in}\|_{u'}^2\right], \qquad \forall t\leq \frac{1}{\nu^{1/3}k^{2/3}},
\end{align}
for some $C>0$.
From \eqref{eq:En4} and the Gronwall lemma, it follows that
\begin{align}
\| f(t)\|^2+\delta_0\| Jf(t)\|^2 \leq \e^{\nu k^2 t^3}\left[\| f^{in}\|^2+\delta_0\|\de_yf^{in}\|^2\right]\leq \e\left[\| f^{in}\|^2+\delta_0\|\de_yf^{in}\|^2\right].
\end{align}
Analogously, using \eqref{eq:ueff} (this is essentially \eqref{eq:ueff2}), we also have
\begin{align}\label{eq:ueff3}
\| u'  f(t)\|^2+\delta_0\|u' Jf(t)\|^2\leq\e\left[\| u'  f^{in}\|^2+\delta_0\|u' \de_yf^{in}\|^2\right].
\end{align}
Hence,
\begin{align}\label{eq:expfinal4}
\| f(t)\|_{u'}^2+\| Jf(t)\|_{u'}^2\leq \frac{\e}{\delta_0}\left[\| f^{in}\|_{u'}^2+\|\de_yf^{in}\|_{u'}^2\right], \qquad \forall t\leq \frac{1}{\nu^{1/3}k^{2/3}},
\end{align}
as we were trying to prove.
Putting together \eqref{eq:expfinal2} and \eqref{eq:expfinal4} we have
\begin{align}\label{eq:expfinal5}
\| f(t)\|_{u'}^2+\| Jf(t)\|_{u'}^2\leq \frac{20}{\delta_0\gamma_0}\e^{-2\eps_0\nu^{1/3}k^{2/3}t}\left[\| f^{in}\|_{u'}^2+\|\de_yf^{in}\|_{u'}^2\right],\qquad \forall t\geq0.
\end{align}
Having \eqref{eq:expfinal5} at our disposal, we only need to invoke Lemma \ref{lem:boundbelow}, so that
\eqref{eq:enMIXmonoFour} follows immediately. The proof of \eqref{eq:engenFour2} is exactly the same, just not considering the parts involving
the functional $\J$ in the above argument. The proof is therefore concluded. 
\end{proof}

\subsection*{Acknowledgements}
The author is indebted with T.M. Elgindi and K. Widmayer for endless and inspiring discussions, especially concerning 
the use of the vector field method in these problems, and with G. Iyer, A. Mazzuccato and C. Nobili for clarifications regarding the Batchelor scale.

\bibliographystyle{abbrv}
\bibliography{biblio}

\end{document}